\definecolor {refcol}{RGB}{40,0,255}
\newfont{\footsc}{cmcsc10 at 8truept}
\newfont{\footbf}{cmbx10 at 8truept}
\newfont{\footrm}{cmr10 at 10truept}
\newtheorem{theorem}{Theorem}
\newtheorem{corollary}[theorem]{Corollary}
\newtheorem{definition}[theorem]{Definition}
\newtheorem{example}[theorem]{Example}
\newtheorem{lemma}[theorem]{Lemma}
\newenvironment{proof}[1][Proof]{\noindent{\textbf {#1}  }}  {\hfill$\Box$\bigskip}
\begin{document}

\title{\textbf{$A_{\alpha}$-spectrum of a graph obtained by copies of a rooted graph
and applications}}
\author{Oscar Rojo\thanks{Department of Mathematics, Universidad Cat\'{o}lica del
Norte, Antofagasta, Chile.}}
\date{}
\maketitle

\begin{abstract}
Given a connected graph $R$ on $r$ vertices and a rooted graph $H,$ let
$R\{H\}$ be the graph obtained from $r$ copies of $H$ and the graph $R$ by
identifying the root of the $i-th$ copy of $H$ with the $i-th$ vertex of $R$.
Let $0\leq\alpha\leq1,$ and let
\[
A_{\alpha}(G)=\alpha D(G)+(1-\alpha)A(G)
\]
where $D(G)$ and $A(G)$ are the diagonal matrix of the vertex degrees of $G$
and the adjacency matrix of $G$, respectively. A basic result on the
$A_{\alpha}-$ spectrum of $R\{H\}$ is obtained. This result is used to prove
that if $H=B_{k}$ is a generalized Bethe tree on $k$ levels, then the
eigenvalues of $A_{\alpha}(R\{B_{k}\})$ are the eigenvalues of symmetric
tridiagonal matrices of order not exceeding $k$; additionally, the
multiplicity of each eigenvalue is determined. Finally, applications to a
unicyclic graph are given, including an upper bound on the $\alpha-$ spectral
radius in terms of the largest vertex degree and the largest height of the
trees obtained by removing the edges of the unique cycle in the graph.

\end{abstract}

\textbf{AMS classification:} \textit{05C50, 15A48}

\textbf{Keywords:} \textit{signless Laplacian; adjacency matrix; convex
combination of matrices; unicyclic graph; spectral radius; largest vertex
degree}

\section{Introduction}

Let $G=\left(  V(G),E(G) \right)  $ be a simple undirected graph on $n$
vertices with vertex set $V(G)$ and edge set $E(G)$. Let $D(G)$ be the
diagonal matrix of order $n$ whose $(i,i)-$entry is the degree of the $i-th$
vertex of $G$ and let $A\left(  G\right) $ be the adjacency matrix of $G.$ The
matrices $L(G)=D(G)-A(G)$ and $Q(G) =D(G)+A(G)$ are the Laplacian and signless
Laplacian matrix of $G$, respectively. The matrices $L(G)$ and $Q(G)$ are both
positive semidefinite and $\left(  0,\mathbf{1}\right) $ is an eigenpair of
$L\left(  G\right) $ where $\mathbf{1}$ is the all ones vector. For a
connected graph $G$, the smallest eigenvalue of $Q(G)$ is positive if and only
if $G$ is non-bipartite.

In \cite{Nik16}, the family of matrices $A_{\alpha}(G)$,
\[
A_{\alpha}(G)=\alpha D(G)+(1-\alpha)A(G)
\]
with $\alpha\in[0,1]$, is introduced together with a number of some basic
results and several open problems.

Observe that $A_{0}\left(  G\right)  =A\left(  G\right)  $ and $A_{1/2}\left(
G\right)  =\frac{1}{2} Q\left(  G\right)  $.

Let $R$ be a connected graph on $r$ vertices. Let $v_{1},\ldots,v_{r}$ be the
vertices of $R$. Let $\varepsilon_{i,j}=\varepsilon_{j,i}=1$ if $v_{i}\sim
v_{j}$ and let $\varepsilon_{i,j}=\varepsilon_{j,i}=0$ otherwise.

We recall that a rooted graph is a graph in which one vertex has been
distinguished as the root and that the level of a vertex is one more than its
distance from the root. In particular, a generalized Bethe tree is a rooted
tree in which vertices at the same level have the same degree.

Let $H$ be a rooted graph. Let $R\{H\}$ be the graph obtained from $R$ and $r$
copies of $H$ by identifying the root of the $i-$copy of $H$ with the vertex
$v_{i}$ of $R$.

In this paper, we obtain a general result on the $A_{\alpha}-$ spectrum of
$R\{H\}$. We use this result to prove that if $H=B_{k}$ is a generalized Bethe
tree on $k$ levels, then the eigenvalues of $A_{\alpha}(R\{B_{k}\})$ are the
eigenvalues of symmetric tridiagonal matrices of order not exceeding $k$;
additionally, the multiplicity of each eigenvalue is determined. Finally, we
apply these results to a unicyclic graph, including the derivation of an upper
bound on the $\alpha-$ spectral radius in terms of the largest vertex degree
and the largest height of the trees obtained by removing the edges of the
unique cycle in the graph.

%Let $\rho(M)$ be the spectral radius of a matrix $M$ and let $\Delta(G)$ be the largest vertex degree of a graph $G$.

\section{A basic result on the $A_{\alpha}-$ spectrum of copies of a rooted
graph}

Let $E$ be the matrix of order $n\times n$ with $1$ in the $\left(
n,n\right)  -$entry and zeros elsewhere. For $i=1,2,...,r,$ let $d\left(
v_{i}\right)  $ be the degree of $v_{i}$ as a vertex of $R$ and let $n$ be the
order of $H$. Then the total number of vertices in $R\{H\}$ is $rn.$ We label
the vertices of $R\{H\}$ as follows: for $i=1,2,\ldots,r,$ using the labels
$\left(  i-1\right)  n+1,\left(  i-1\right)  n+2,\ldots\ ,in,$ we label the
vertices of the $i-th$ copy of $H$ from the vertices at the bottom (the set of
vertices at the largest distance from the root) to the vertex $v_{i}$.

From now on, let $\alpha\in\lbrack0,1]$ and let $\beta=1-\alpha$. With the
above labeling, we obtain $A_{\alpha}(R\{H\})$=%
\begin{equation}
\left[
\begin{array}
[c]{ccccc}%
A_{\alpha}(H)+\alpha d(v_{1})E & \beta\varepsilon_{1,2}E & \cdots & \cdots &
\beta\varepsilon_{1,r}E\\
\beta\varepsilon_{1,2}E & \ddots & \ddots &  & \beta\varepsilon_{2,r}E\\
\vdots & \ddots & \ddots & \ddots & \vdots\\
\vdots &  & \ddots & A_{\alpha}(H)+\alpha d(v_{r-1})E & \beta\varepsilon
_{r-1,r}E\\
\beta\varepsilon_{1,r}E & \beta\varepsilon_{2,r}E & \cdots & \beta
\varepsilon_{r-1,r}E & A_{\alpha}(H)+\alpha d(v_{r})E
\end{array}
\right]  .\label{Mx}%
\end{equation}

In this paper, the identity matrix of appropriate order is denoted by $I$ and
$I_{m}$ denotes the identity matrix of order $m$. Furthermore, we need the
following additional notation: $\left\vert M\right\vert $ and $\phi
_{M}(\lambda)$ denote the determinant and the characteristic polynomial of
$M$, respectively, and $B^{T}$ denotes the transpose of $B$.

The Kronecker product \cite{Zhang} of two matrices $A=\left(  a_{i,j}\right)
$ and $B=\left(  b_{i,j}\right)  $ of sizes $m\times m$ and $n\times n,$
respectively, is the $\left(  mn\right)  \times\left(  mn\right)  $ matrix
$A\otimes B=\left(  a_{i,j}B\right)  .$ Then, in particular, $I_{n}\otimes
I_{m}=I_{nm}$. Some basic properties of the Kronecker product are $\left(
A\otimes B\right)  ^{T}=A^{T}\otimes B^{T}$ and $\left(  A\otimes B\right)
\left(  C\otimes D\right)  =AC\otimes BD$ for matrices of appropriate sizes.
Moreover, if $A$ and $B$ are invertible matrices then $\left(  A\otimes
B\right)  ^{-1}=A^{-1}\otimes B^{-1}.$

Let $\mathrm{Spec}(M)$ be the spectrum of a matrix $M$.

\begin{theorem}
\label{general}Let $\rho_{1}(\alpha),\rho_{2}(\alpha),\ldots,\rho_{r}(\alpha)$
be the eigenvalues of $A_{\alpha}(R)$. Then%
\begin{equation}
\mathrm{Spec}(A_{\alpha}(R\{H\})) = \cup_{j=1}^{r}\mathrm{Spec}(A_{\alpha}(H)
+\rho_{j}(\alpha) E).\label{rH}%
\end{equation}

\end{theorem}

\begin{proof}
From $\left(  \ref{Mx}\right)  $%
\[
A_{\alpha}(R\{H\})=I_{r}\otimes A_{\alpha}(H)+A_{\alpha}(R)\otimes E.
\]
Let
\[
V=\left[
\begin{array}
[c]{ccccc}%
\mathbf{v}_{1} & \mathbf{v}_{2} & \ldots & \mathbf{v}_{r-1} & \mathbf{v}_{r}%
\end{array}
\right]
\]
be an orthogonal matrix whose columns $\mathbf{v}_{1},\mathbf{v}_{2}%
,\ldots,\mathbf{v}_{r}$ are eigenvectors corresponding to the eigenvalues
$\rho_{1}(\alpha),\rho_{2}(\alpha),\ldots,\rho_{r}(\alpha)$, respectively.
Then%
\begin{align*}
(V\otimes I_{n})A_{\alpha}(R\{H\})(V^{T}\otimes I_{n})  & =(V\otimes
I_{n})(I_{r}\otimes A_{\alpha}(H)+A_{\alpha}(R)\otimes E)(V^{T}\otimes
I_{n})\\
& =I_{r}\otimes A_{\alpha}(H)+(VA_{\alpha}(R)V^{T})\otimes E.
\end{align*}
Moreover,
\begin{align*}
(VA_{\alpha}(R)V^{T})\otimes E  & =\\
& \left[
\begin{array}
[c]{ccccc}%
\rho_{1}(\alpha) &  &  &  & \\
& \rho_{2}(\alpha) &  &  & \\
&  & \ddots &  & \\
&  &  & \ddots & \\
&  &  &  & \rho_{r}(\alpha)
\end{array}
\right]  \otimes E\\
& =\\
& \left[
\begin{array}
[c]{ccccc}%
\rho_{1}(\alpha)E &  &  &  & \\
& \rho_{2}(\alpha)E &  &  & \\
&  & \ddots &  & \\
&  &  & \ddots & \\
&  &  &  & \rho_{r}(\alpha)E
\end{array}
\right]  .
\end{align*}
Therefore,%
\begin{align*}
(V\otimes I_{n})A_{\alpha}(R\{H\})(V^{T}\otimes I_{n})  & =\\
& \left[
\begin{array}
[c]{ccccc}%
A_{\alpha}(H)+\rho_{1}(\alpha)E &  &  &  & \\
& A_{\alpha}(H)+\rho_{2}(\alpha)E &  &  & \\
&  & \ddots &  & \\
&  &  & \ddots & \\
&  &  &  & A_{\alpha}(H)+\rho_{r}(\alpha)E
\end{array}
\right]  .
\end{align*}
Since $A_{\alpha}(R\{H\})$ and $(V\otimes I_{n})A_{\alpha}(R\{H\})(V^{T}%
\otimes I_{n})$ are similar matrices, $\left(  \ref{rH}\right)  $ follows.
\end{proof}

\section{$A_{\alpha}$-spectrum of copies of a generalized Bethe tree}

From now on, let $B_{k}$ be a generalized Bethe tree of $k$ levels. From
Theorem \ref{general}, we have

\begin{theorem}
\label{RBk}Let $\rho_{1}(\alpha),\rho_{2}(\alpha),\ldots,\rho_{r}(\alpha)$ be
the eigenvalues of $A_{\alpha}(R)$. Then%
\[
\mathrm{Spec}(A_{\alpha}(R\{B_{k}\})) = \cup_{i=1}^{r}\mathrm{Spec}(A_{\alpha
}(B_{k}) +\rho_{i}(\alpha) E).
\]

\end{theorem}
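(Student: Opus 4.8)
The plan is to observe that Theorem \ref{RBk} is nothing more than the special case of Theorem \ref{general} obtained by setting $H=B_{k}$. Indeed, a generalized Bethe tree $B_{k}$ is, first and foremost, a rooted graph: it comes equipped with a distinguished root (the single vertex at level $k$), so it satisfies the sole hypothesis required of $H$ in the construction of $R\{H\}$. Consequently the graph $R\{B_{k}\}$ is precisely the graph $R\{H\}$ with $H=B_{k}$, and the labeling scheme described before equation \eqref{Mx} applies verbatim.

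With that identification in hand, I would simply invoke Theorem \ref{general} directly. That theorem asserts, for an arbitrary rooted graph $H$ and with $\rho_{1}(\alpha),\ldots,\rho_{r}(\alpha)$ denoting the eigenvalues of $A_{\alpha}(R)$, the spectral decomposition
\[
\mathrm{Spec}(A_{\alpha}(R\{H\}))=\cup_{j=1}^{r}\mathrm{Spec}(A_{\alpha}(H)+\rho_{j}(\alpha)E).
\]
Substituting $H=B_{k}$ yields exactly the asserted formula
\[
\mathrm{Spec}(A_{\alpha}(R\{B_{k}\}))=\cup_{i=1}^{r}\mathrm{Spec}(A_{\alpha}(B_{k})+\rho_{i}(\alpha)E),
\]
the only change being the cosmetic relabeling of the index from $j$ to $i$.

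Since the argument is a direct specialization, I do not anticipate any genuine obstacle. The only point warranting a sentence of care is the verification that $B_{k}$ legitimately qualifies as a rooted graph, so that the hypotheses of Theorem \ref{general} are met without further ado; once that is noted, nothing in the proof of Theorem \ref{general} used any property of $H$ beyond its being a rooted graph of fixed order $n$, so the conclusion transfers unchanged. I would therefore present the proof as a one- or two-line corollary-style deduction rather than reproving the Kronecker-product diagonalization. The real mathematical content—showing that $A_{\alpha}(B_{k})+\rho_{i}(\alpha)E$ reduces to symmetric tridiagonal blocks of order at most $k$—belongs to the subsequent development and is not part of this statement, so I would defer all such structural analysis of $B_{k}$ to the later results.
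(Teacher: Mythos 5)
Your proposal matches the paper exactly: the paper gives no separate proof of Theorem \ref{RBk}, stating it as an immediate consequence of Theorem \ref{general} with $H=B_{k}$, which is precisely your one-line specialization. Your added remark that $B_{k}$, being a rooted tree, qualifies as a rooted graph is the only hypothesis to check, and it is correct.
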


For $1\leq j\leq k$, let $n_{j}$ and $d_{j}$ be the number and the degree of
the vertices of $B_{k}$ at the level $k-j+1,$ respectively. Thus $d_{k}$ is
the degree of the root, $n_{k}=1,$ $d_{1}=1$ and $n_{1}$ is the number of
pendant vertices. For $1 \leq j \leq k-1$, let $m_{j}=\frac{n_{j}}{n_{j+1}}$.

\begin{definition}
\label{defP}Let%
\[
P_{0}\left(  \lambda\right)  =1,\;P_{1}\left(  \lambda\right)  =\lambda
-\alpha,
\]
and
\[
P_{j}(\lambda)=(\lambda-\alpha d_{j})P_{j-1}(\lambda)-\beta^{2}m_{j-1}%
P_{j-2}(\lambda)
\]
for $j=2,\ldots,k$.
\end{definition}

For brevity, sometimes we write $f$ instead $f(\lambda)$. The polynomials in
Definition \ref{defP} are used in \cite{NPRS16}, Theorem $5$, to factor the
characteristic polynomial of $A_{\alpha}(B_{k})$ as given below.

\begin{theorem}
\label{poly} The characteristic polynomial of $A_{\alpha}(B_{k})$ satisfies%
\begin{equation}
\phi_{A_{\alpha}(B_{k})}(\lambda) =P_{1}^{n_{1}-n_{2}}P_{2}^{n_{2}-n_{3}%
}\ldots P_{k-2}^{n_{k-2}-n_{k-1}}P_{k-1}^{n_{k-1}-1}P_{k}. \label{pA}%
\end{equation}

\end{theorem}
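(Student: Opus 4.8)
The plan is to explicitly write down the matrix $A_{\alpha}(B_k)$ under the prescribed bottom-to-root labeling and exploit the self-similar block structure of a generalized Bethe tree. Since vertices at the same level have the same degree, grouping the $n_j$ vertices at level $k-j+1$ shows that $A_{\alpha}(B_k)$ is a block tridiagonal matrix: the diagonal block for level $k-j+1$ is $\alpha d_j I_{n_j}$, and the off-diagonal coupling between consecutive levels is $\beta$ times an incidence-type matrix $C_j$ recording which child attaches to which parent. The crucial combinatorial fact is that each vertex at level $k-j+1$ has exactly $m_j=n_j/n_{j+1}$ children, so after a suitable ordering the coupling matrix satisfies $C_j C_j^T = m_j I_{n_{j+1}}$ (each parent sees $m_j$ children) while $C_j^T C_j$ is a block-diagonal matrix of all-ones blocks. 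This is the structural engine behind the factor $\beta^2 m_{j-1}$ appearing in the recurrence of Definition~\ref{defP}.

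Next I would diagonalize level by level using the standard technique for Bethe trees (as in \cite{NPRS16}). Working from the bottom up, I would build an orthogonal change of basis that block-diagonalizes $A_{\alpha}(B_k)$ into a direct sum of small tridiagonal matrices. The idea is that for each level $j$ one can split the space spanned by that level's vertices into the one-dimensional ``symmetric'' part (the constant vector over the $m_{j}$ siblings of a common parent, which propagates up the tree) and an orthogonal complement of dimension $n_j-n_{j+1}$ consisting of vectors summing to zero within each sibling group. On the symmetric part the coupling $\beta C_j$ acts by the scalar $\beta\sqrt{m_j}$ after normalization, producing tridiagonal matrices $T_j$ whose leading principal minors are governed by exactly the recurrence $P_j(\lambda)=(\lambda-\alpha d_j)P_{j-1}(\lambda)-\beta^2 m_{j-1}P_{j-2}(\lambda)$; on the vanishing-sum complement the off-diagonal block decouples upward, so those eigenvectors live entirely within levels $1,\dots,j$ and contribute characteristic factor $P_j(\lambda)$ with multiplicity $n_j-n_{j+1}$.

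Concretely, I would verify by expanding $\det(\lambda I - A_{\alpha}(B_k))$ along this block structure that each nontrivial sibling-difference subspace at level $k-j+1$ contributes a copy of the $j\times j$ tridiagonal matrix $T_j$ whose characteristic polynomial is $P_j(\lambda)$. The subspace of sibling-differences at level $k-j+1$ has dimension $n_j - n_{j+1}$ for $1\le j\le k-1$, which accounts for the exponents $n_j-n_{j+1}$ on $P_j$ in $(\ref{pA})$; the single fully-symmetric mode that survives all the way to the root gives the factor $P_k$ to the first power, and the exponent $n_{k-1}-1$ on $P_{k-1}$ reflects that $n_k=1$. Summing dimensions, $\sum_{j=1}^{k-1}(n_j-n_{j+1})\cdot j + 1\cdot k$ should equal $n_1+n_2+\cdots+n_k=n$, the order of $B_k$, which confirms that no eigenvalues are missed.

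The main obstacle I expect is bookkeeping the recurrence bottom-up correctly: one must check that the determinant of the $j\times j$ tridiagonal block, expanded along its last row, produces precisely $(\lambda-\alpha d_j)P_{j-1}-\beta^2 m_{j-1}P_{j-2}$ with the right index on $m$, and that the base cases $P_0=1$, $P_1=\lambda-\alpha$ match the pendant level (where $d_1=1$, so the diagonal entry is $\alpha d_1=\alpha$ and there is no coupling below). Once the correspondence between the orthogonal decomposition and the polynomials $P_j$ is pinned down, the factorization $(\ref{pA})$ follows by collecting the characteristic polynomials of all the tridiagonal blocks with their multiplicities. Since the statement is attributed to \cite{NPRS16}, I would ultimately appeal to that reference for the detailed diagonalization, but the sketch above is how I would reconstruct it.
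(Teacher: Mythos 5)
The paper gives no proof of this theorem: it is quoted directly from \cite{NPRS16}, Theorem 5, and your sketch is a correct outline of the level-by-level orthogonal decomposition used there (splitting each level into the sibling-symmetric mode that propagates to the root and the sibling-difference modes that spawn copies of $T_j$), with the dimension count $\sum_{j=1}^{k-1} j(n_j-n_{j+1})+k=n$ confirming that all eigenvalues are accounted for. The only slip is a harmless indexing one: the vertices having $m_j=n_j/n_{j+1}$ children are those at level $k-j$ (index $j+1$), not level $k-j+1$, but your subsequent identity $C_jC_j^T=m_jI_{n_{j+1}}$ is stated correctly, so the recurrence and the exponents in (\ref{pA}) come out right.
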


At this point, we introduce the notation $\widetilde{M}$ to mean the matrix
obtained from $M$ by deleting its last row and its last column.

For $1 \leq i \leq r$, consider the matrices $A_{\alpha}(B_{k}) +\rho
_{i}(\alpha) E$ in Theorem \ref{RBk}. Let
\[
M_{i}(\alpha) = A_{\alpha}(B_{k}) +\rho_{i}(\alpha) E.
\]

Then
\[
\phi_{M_{i}(\alpha)}(\lambda) = |\lambda I - A_{\alpha}(B_{k}) - \rho
_{i}(\alpha) E|.
\]

Applying linearity on the last column, we obtain
\begin{equation}
\label{t5}\phi_{M_{i}(\alpha)}(\lambda)= |\lambda I - A_{\alpha}(B_{k})| -
\rho_{i}(\alpha)|\lambda I - \widetilde{A_{\alpha}(B_{k})}|=\\
\phi_{A_{\alpha}(B_{k})}(\lambda)-\rho_{i}(\alpha)\phi_{\widetilde{A_{\alpha
}(B_{k})}}(\lambda).
\end{equation}

Theorem \ref{poly} gives $\phi_{A_{\alpha}(B_{k})}(\lambda)$ as a product of
powers of the polynomials $P_{j}(\lambda)$ ($1 \leq j \leq k$).

We now focus our attention on $\phi_{\widetilde{A_{\alpha}(B_{k})}}(\lambda)$.
From the proof of Theorem 5 in \cite{NPRS16}, we have
\[
\phi_{A_{\alpha}(B_{k})}(\lambda)=\phi_{\widetilde{A_{\alpha}(B_{k})}}%
(\lambda)\frac{P_{k}}{P_{k-1}}.
\]
From this identity and Theorem \ref{poly}, we obtain
\[
\phi_{\widetilde{A_{\alpha}(B_{k})}}(\lambda)=P_{1}^{n_{1}-n_{2}}P_{2}%
^{n_{2}-n_{3}}\ldots P_{k-2}^{n_{k-2}-n_{k-1}}P_{k-1}^{n_{k}}%
\]
Replacing in (\ref{t5}) and factoring, we get

\begin{lemma}
\label{polyRBk} Let $\rho_{1}(\alpha),\rho_{2}(\alpha),\ldots,\rho_{r}%
(\alpha)$ be the eigenvalues of $A_{\alpha}(R)$. For $i=1,\ldots,r$, the
characteristic polynomial of $M_{i}(\alpha)=A_{\alpha}(B_{k}) +\rho_{i}%
(\alpha) E$ satisfies
\[
\phi_{M_{i}(\alpha)}(\lambda)=P_{1}^{n_{1}-n_{2}}P_{2}^{n_{2}-n_{3}}\ldots
P_{k-2}^{n_{k-2}-n_{k-1}}P_{k-1}^{n_{k}-1}(P_{k}-\rho_{i}(\alpha) P_{k-1}).
\]

\end{lemma}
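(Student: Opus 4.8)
The plan is to read $\phi_{M_{i}(\alpha)}(\lambda)$ off the two explicit factorizations already assembled above and then extract their greatest common factor. I would start from the linearity identity (\ref{t5}),
\[
\phi_{M_{i}(\alpha)}(\lambda)=\phi_{A_{\alpha}(B_{k})}(\lambda)-\rho_{i}(\alpha)\,\phi_{\widetilde{A_{\alpha}(B_{k})}}(\lambda),
\]
which already isolates the effect of the perturbation $\rho_{i}(\alpha)E$: since $E$ changes only the last diagonal entry of $A_{\alpha}(B_{k})$, expanding $|\lambda I-A_{\alpha}(B_{k})-\rho_{i}(\alpha)E|$ along its last column by linearity splits the determinant into the characteristic polynomial of $A_{\alpha}(B_{k})$ and, with coefficient $-\rho_{i}(\alpha)$, the characteristic polynomial of the matrix $\widetilde{A_{\alpha}(B_{k})}$ obtained by deleting the last row and column.

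Next I would substitute the two product forms supplied before the statement. Theorem \ref{poly} expresses $\phi_{A_{\alpha}(B_{k})}$ as $P_{1}^{n_{1}-n_{2}}\cdots P_{k-2}^{n_{k-2}-n_{k-1}}$ times a power of $P_{k-1}$ and a single factor $P_{k}$, while the identity drawn from \cite{NPRS16} yields $\phi_{\widetilde{A_{\alpha}(B_{k})}}$ as the very same leading product $P_{1}^{n_{1}-n_{2}}\cdots P_{k-2}^{n_{k-2}-n_{k-1}}$, but carrying one additional power of $P_{k-1}$ and no factor $P_{k}$. Consequently both polynomials share the common factor $P_{1}^{n_{1}-n_{2}}\cdots P_{k-2}^{n_{k-2}-n_{k-1}}$ together with exactly the power of $P_{k-1}$ that occurs in $\phi_{A_{\alpha}(B_{k})}$. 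Factoring this common term out of (\ref{t5}) leaves $P_{k}$ from the first summand and $\rho_{i}(\alpha)P_{k-1}$ from the second, so the residual bracket is precisely $P_{k}-\rho_{i}(\alpha)P_{k-1}$, which is the claimed form.

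The only point requiring attention is the exponent bookkeeping on $P_{k-1}$: one must verify that $\phi_{\widetilde{A_{\alpha}(B_{k})}}$ indeed carries exactly one more power of $P_{k-1}$ than $\phi_{A_{\alpha}(B_{k})}$ does (this is immediate from $\phi_{A_{\alpha}(B_{k})}=\phi_{\widetilde{A_{\alpha}(B_{k})}}\,P_{k}/P_{k-1}$ once we note that $P_{k}$ appears to the first power in $\phi_{A_{\alpha}(B_{k})}$), so that the extracted power is the smaller of the two and the leftover is genuinely linear in $\rho_{i}(\alpha)$. This is the main—and quite mild—obstacle; all the substantive work, namely the explicit product expressions for the two characteristic polynomials, has already been carried out in Theorem \ref{poly} and in the cited computation, so no new determinant evaluation is needed and the lemma follows by a single factorization.
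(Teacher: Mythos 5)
Your argument is correct and follows essentially the same route as the paper's own derivation: the paper likewise combines the linearity identity $(\ref{t5})$ with Theorem \ref{poly} and the quotient identity $\phi_{A_{\alpha}(B_{k})}=\phi_{\widetilde{A_{\alpha}(B_{k})}}\,P_{k}/P_{k-1}$ from \cite{NPRS16}, and then extracts the common factor exactly as you do. Your exponent bookkeeping (the common power of $P_{k-1}$ being the one occurring in $\phi_{A_{\alpha}(B_{k})}$, namely $P_{k-1}^{n_{k-1}-1}$, with $\phi_{\widetilde{A_{\alpha}(B_{k})}}$ carrying one more) is the right reading; the exponent $n_{k}-1$ printed in the lemma is evidently a slip for $n_{k-1}-1$, as confirmed by the multiplicity $r(n_{k-1}-n_{k})$ asserted in Theorem \ref{specRBk}.
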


\begin{definition}
\label{T} For $j=1,2,\ldots,k-1$, let $T_{j}$ be the $j\times j$ leading
principal submatrix of the $k\times k$ symmetric tridiagonal matrix
\begin{equation}
T_{k}=\left[
\begin{array}
[c]{ccccc}%
\alpha & \beta\sqrt{d_{2}-1} & 0 &  & 0\\
\beta\sqrt{d_{2}-1} & \alpha d_{2} & \ddots &  & \\
& \ddots & \ddots & \beta\sqrt{d_{k-1}-1} & \\
&  & \beta\sqrt{d_{k-1}-1} & \alpha d_{k-1} & \beta\sqrt{d_{k}}\\
0 &  & 0 & \beta\sqrt{d_{k}} & \alpha d_{k}%
\end{array}
\right] . , \label{mat}%
\end{equation}

\end{definition}

Since $d_{s}>1$ for all $s=2,3,....,j$, each matrix $T_{j}$ has nonzero
codiagonal entries and it is known that its eigenvalues are simple.

The relationship between these matrices and the polynomials $P_{j}(\lambda)$
is given in \cite{NPRS16}, Lemma $7$:

\begin{lemma}
\label{TP} For $j=1,\ldots,k$,
\[
\phi_{T_{j}}(\lambda) = P_{j}(\lambda).
\]

\end{lemma}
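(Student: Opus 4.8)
The plan is to prove Lemma~\ref{TP} by strong induction on $j$, showing that the characteristic polynomials $\phi_{T_j}(\lambda)$ satisfy exactly the same three-term recurrence as the polynomials $P_j(\lambda)$ from Definition~\ref{defP}, and that the two base cases agree. Since a monic polynomial sequence is uniquely determined by a second-order linear recurrence together with its first two terms, matching the recurrence and the initial data forces $\phi_{T_j}=P_j$ for all $j$.

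First I would dispose of the base cases. The matrix $T_1$ is the $1\times 1$ matrix $[\alpha]$, so $\phi_{T_1}(\lambda)=\lambda-\alpha=P_1(\lambda)$; and setting $\phi_{T_0}(\lambda)=1=P_0(\lambda)$ handles the degenerate starting value. Next I would establish the recurrence for $\phi_{T_j}$. Because $T_k$ is symmetric tridiagonal and each $T_j$ is its $j\times j$ leading principal submatrix, expanding $\det(\lambda I - T_j)$ along the last row (or column) gives the standard three-term recurrence for tridiagonal determinants:
\[
\phi_{T_j}(\lambda)=(\lambda - a_j)\,\phi_{T_{j-1}}(\lambda) - b_{j-1}^2\,\phi_{T_{j-2}}(\lambda),
\]
where $a_j$ is the $j$-th diagonal entry and $b_{j-1}$ is the $(j-1,j)$ off-diagonal entry of $T_k$.

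The crux is then to check that the coefficients produced by this cofactor expansion coincide \emph{term by term} with those in Definition~\ref{defP}, and this is where the bookkeeping between the two index conventions must be handled carefully. Reading off the entries of the matrix in~(\ref{mat}): the diagonal entries are $a_1=\alpha$ and $a_j=\alpha d_j$ for $2\le j\le k$, while the off-diagonal entry between positions $j-1$ and $j$ is $b_{j-1}=\beta\sqrt{d_j-1}$ for $2\le j\le k-1$ and $b_{k-1}=\beta\sqrt{d_k}$ in the final position. Thus for $2\le j\le k-1$ one gets $b_{j-1}^2=\beta^2(d_j-1)$, and I would need to verify that $d_j-1=m_{j-1}$, i.e. that the number-theoretic quantity $m_{j-1}=n_{j-1}/n_j$ equals $d_j-1$; this identity is exactly what the generalized-Bethe-tree structure guarantees, since each vertex at level $k-j+1$ (degree $d_j$) has $d_j-1$ children, so $n_{j-1}=(d_j-1)n_j$. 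For the top position $j=k$ one similarly has $b_{k-1}^2=\beta^2 d_k$, and since the root has $n_k=1$ so that $m_{k-1}=n_{k-1}/1=n_{k-1}=d_k$, again $b_{k-1}^2=\beta^2 m_{k-1}$, matching the $\beta^2 m_{j-1}$ coefficient in Definition~\ref{defP}.

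The main obstacle I anticipate is not the induction mechanics—those are routine once the recurrence is written down—but rather nailing the two combinatorial identities $m_{j-1}=d_j-1$ (for interior levels) and $m_{k-1}=d_k$ (at the root), and being scrupulous about the off-by-one shifts between the level index $k-j+1$ used to define $n_j,d_j$ and the matrix-position index $j$. Once these coefficient identifications are confirmed, the recurrence for $\phi_{T_j}$ is literally the recurrence defining $P_j$, the base cases agree, and an immediate induction yields $\phi_{T_j}(\lambda)=P_j(\lambda)$ for all $j=1,\dots,k$, completing the proof.
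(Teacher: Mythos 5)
Your proof is correct, and it is the standard argument: the paper itself gives no proof of Lemma~\ref{TP} but cites it as Lemma~7 of \cite{NPRS16}, where it is established in exactly this way, via the three-term recurrence for characteristic polynomials of leading principal submatrices of a symmetric tridiagonal matrix. You have correctly identified and verified the two combinatorial identities that make the coefficients match, namely $m_{j-1}=n_{j-1}/n_{j}=d_{j}-1$ for $2\le j\le k-1$ (each non-root internal vertex has one parent and $d_{j}-1$ children) and $m_{k-1}=n_{k-1}=d_{k}$ at the root, together with the base cases $\phi_{T_{1}}(\lambda)=\lambda-\alpha=P_{1}(\lambda)$ and $\phi_{T_{0}}(\lambda)=1=P_{0}(\lambda)$.
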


\begin{definition}
\label{M} For $i=1,\ldots,r$, let $Q_{i}(\lambda)$ be the polynomial
\[
Q_{i}(\lambda)=P_{k}(\lambda)-\rho_{i}(\alpha) P_{k-1}(\lambda)
\]
and $S_{i}$ be the $k \times k$ matrix
\[
S_{i}=\left[
\begin{array}
[c]{ccccc}%
\alpha & \beta\sqrt{d_{2}-1} & 0 &  & 0\\
\beta\sqrt{d_{2}-1} & \alpha d_{2} & \ddots &  & \\
& \ddots & \ddots & \beta\sqrt{d_{k-1}-1} & \\
&  & \beta\sqrt{d_{k-1}-1} & \alpha d_{k-1} & \beta\sqrt{d_{k}}\\
0 &  & 0 & \beta\sqrt{d_{k}} & \alpha d_{k} + \rho_{i}(\alpha)
\end{array}
\right] .
\]

\end{definition}

The relationship between the polynomials $Q_{i}$ and the matrices $S_{i}$ is
given in the following lemma.

\begin{lemma}
\label{MR} For $i=1,\ldots,r$, $Q_{i}(\lambda)$ is the characteristic
polynomial of the $k \times k$ matrix $S_{i}$, that is,
\[
\phi_{S_{i}}(\lambda)=Q_{i}(\lambda).
\]

\end{lemma}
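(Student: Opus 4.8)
The plan is to recognize $S_{i}$ as a simple rank-one modification of the tridiagonal matrix $T_{k}$ from Definition \ref{T} and then expand the characteristic determinant by linearity on its last column, exactly as was done to obtain (\ref{t5}). Comparing the displayed matrix in Definition \ref{M} with the matrix $T_{k}$ in (\ref{mat}), observe that $S_{i}$ and $T_{k}$ are identical except in the $(k,k)$-entry, where $S_{i}$ has $\alpha d_{k}+\rho_{i}(\alpha)$ in place of $\alpha d_{k}$. Thus $S_{i}=T_{k}+\rho_{i}(\alpha)E$, where here $E$ denotes the $k\times k$ matrix with $1$ in its $(k,k)$-entry and zeros elsewhere.

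Next I would write $\phi_{S_{i}}(\lambda)=|\lambda I-T_{k}-\rho_{i}(\alpha)E|$ and split the determinant according to the last column. Since only the bottom-right entry carries the term $-\rho_{i}(\alpha)$, linearity on the $k$-th column yields
\[
\phi_{S_{i}}(\lambda)=|\lambda I-T_{k}|-\rho_{i}(\alpha)\,|\lambda I-\widetilde{T_{k}}|,
\]
where $\widetilde{T_{k}}$ is $T_{k}$ with its last row and column deleted. The key structural observation is that, by Definition \ref{T}, deleting the last row and column of $T_{k}$ returns precisely the $(k-1)\times(k-1)$ leading principal submatrix $T_{k-1}$, so $\widetilde{T_{k}}=T_{k-1}$.

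It then remains to invoke Lemma \ref{TP}, which gives $|\lambda I-T_{k}|=\phi_{T_{k}}(\lambda)=P_{k}(\lambda)$ and $|\lambda I-T_{k-1}|=\phi_{T_{k-1}}(\lambda)=P_{k-1}(\lambda)$. Substituting these identities produces
\[
\phi_{S_{i}}(\lambda)=P_{k}(\lambda)-\rho_{i}(\alpha)P_{k-1}(\lambda)=Q_{i}(\lambda),
\]
which is exactly the claimed identity. I do not anticipate a substantive obstacle here: the argument is a direct transcription of the cofactor/linearity device already used for (\ref{t5}), and the only point requiring a moment's care is verifying that the deleted submatrix $\widetilde{T_{k}}$ coincides with $T_{k-1}$, which is immediate from the nested definition of the $T_{j}$.
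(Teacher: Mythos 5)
Your argument is correct and is essentially identical to the paper's proof: both expand $|\lambda I - S_i|$ by linearity on the last column to get $|\lambda I - T_k| - \rho_i(\alpha)|\lambda I - T_{k-1}|$ and then apply Lemma \ref{TP}. The only difference is that you spell out the intermediate observation $S_i = T_k + \rho_i(\alpha)E$ and the identification $\widetilde{T_k} = T_{k-1}$, which the paper leaves implicit.
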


\begin{proof}
Applying linearity on the last column, we obtain
\[
\phi_{S_{i}}(\lambda)=|\lambda I-S_{i}|=|\lambda I-T_{k}|-\rho_{i}%
(\alpha)|\lambda I-T_{k-1}|.
\]
Now Lemma \ref{TP} implies that
\[
\phi_{S_{i}}(\lambda)=P_{k}(\lambda)-\rho_{i}(\alpha)P_{k-1}(\lambda
)=Q_{i}(\lambda).
\]

\end{proof}

We are ready to state the main result of this section.

\begin{theorem}
\label{specRBk}Let $B_{k}$ be a generalized Bethe tree on $k$ levels, and
$\alpha\in\left[  0,1\right)  $. Let $\rho_{1}(\alpha)\ldots\rho_{r}(\alpha)$
be the eigenvalues of $A_{\alpha}(R)$ in which $\rho_{1}(\alpha)$ is the
spectral radius. If the matrices $T_{1},\ldots,T_{k}$ and $S_{1},\ldots,S_{r}$
are as in Definitions \ref{T} and \ref{M}, respectively, then

(1)
\begin{equation}
\mathrm{Spec}(A_{\alpha}(R\{B_{k}\}))=(\cup_{j=1}^{k-1}\mathrm{Spec}%
(T_{j}))\cup(\cup_{i=1}^{r}\mathrm{Spec}(S_{i})).
\end{equation}

(2) For $1 \leq j \leq k-1$, the multiplicity of each eigenvalue of $T_{j}$ as
an eigenvalue of $A_{\alpha}(R\{B_{k}\})$ is $r(n_{j}-n_{j+1})$, and for $1
\leq i \leq r$, the eigenvalues of $S_{i}$ as eigenvalues of $A_{\alpha
}(R\{B_{k}\})$ are simple. If some eigenvalues obtained in different matrices
are equal, their multiplicities are added together.

(3) The largest eigenvalue of $S_{1}$ is the spectral radius of $A_{\alpha
}(R\{B_{k}\})$.
\end{theorem}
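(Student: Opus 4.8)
The plan is to read off both the spectrum and its multiplicities directly from the characteristic polynomials already computed, and then to treat the spectral-radius claim (3) separately via Perron--Frobenius theory together with eigenvalue monotonicity and Cauchy interlacing. First I would invoke Theorem \ref{RBk}, which writes $\mathrm{Spec}(A_{\alpha}(R\{B_k\}))$ as $\cup_{i=1}^{r}\mathrm{Spec}(M_i(\alpha))$ with $M_i(\alpha)=A_{\alpha}(B_k)+\rho_i(\alpha)E$, so that the multiplicity of any eigenvalue of $A_{\alpha}(R\{B_k\})$ is the sum over $i$ of its multiplicities as an eigenvalue of $M_i(\alpha)$. It therefore suffices to count the roots of each $\phi_{M_i(\alpha)}$ with multiplicity. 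Lemma \ref{polyRBk} factors it as $\phi_{M_i(\alpha)}(\lambda)=\bigl(\prod_{j=1}^{k-1}P_j^{\,n_j-n_{j+1}}\bigr)Q_i(\lambda)$ (using $n_k=1$, so that the exponent of $P_{k-1}$ equals $n_{k-1}-1=n_{k-1}-n_k$). By Lemma \ref{TP} each $P_j=\phi_{T_j}$, and by Lemma \ref{MR} the remaining factor is $Q_i=\phi_{S_i}$; hence $\mathrm{Spec}(M_i(\alpha))$ consists of $\mathrm{Spec}(T_j)$ with multiplicity $n_j-n_{j+1}$ for $1\leq j\leq k-1$, together with $\mathrm{Spec}(S_i)$ with multiplicity one.

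Next I would take the union over $i=1,\ldots,r$. The factors $P_j^{\,n_j-n_{j+1}}$ are identical in every block $M_i(\alpha)$, so each eigenvalue of $T_j$ occurs in all $r$ blocks and accumulates total multiplicity $r(n_j-n_{j+1})$; this yields (1) together with the first half of (2). The factor $Q_i$, by contrast, depends on $i$ through $\rho_i(\alpha)$. Because $\alpha\in[0,1)$ forces $\beta>0$ and $d_s>1$ for $2\leq s\leq k$, each $S_i$ is symmetric tridiagonal with nonzero codiagonal, so its eigenvalues are simple; thus each eigenvalue of $S_i$ contributes multiplicity one, completing (2). Coincidences between eigenvalues arising from different blocks simply add their multiplicities, exactly as the statement permits.

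For (3) I would first note that $A_{\alpha}(R\{B_k\})$ is nonnegative and, since $R\{B_k\}$ is connected and $\beta>0$, irreducible; Perron--Frobenius then identifies its spectral radius with its largest eigenvalue, so by (1) it is enough to show that $\lambda_{\max}(S_1)$ dominates $\lambda_{\max}(S_i)$ for every $i$ and $\lambda_{\max}(T_j)$ for every $j$. Let $F$ be the $k\times k$ matrix with a single $1$ in its $(k,k)$ entry, so that $S_i=T_k+\rho_i(\alpha)F$ and $S_1-S_i=(\rho_1(\alpha)-\rho_i(\alpha))F$. Since $\rho_1(\alpha)$ is the spectral radius of $A_{\alpha}(R)$, the coefficient $\rho_1(\alpha)-\rho_i(\alpha)$ is nonnegative, the perturbation is positive semidefinite, and Weyl monotonicity gives $\lambda_{\max}(S_1)\geq\lambda_{\max}(S_i)$. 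For the trees, $T_j$ is a leading principal submatrix of $T_k$ for $1\leq j\leq k-1$, so Cauchy interlacing gives $\lambda_{\max}(T_j)\leq\lambda_{\max}(T_k)$; and since $S_1=T_k+\rho_1(\alpha)F$ with $\rho_1(\alpha)\geq0$ (the Perron root of the nonnegative matrix $A_{\alpha}(R)$), monotonicity again gives $\lambda_{\max}(T_k)\leq\lambda_{\max}(S_1)$. Combining these, $\lambda_{\max}(S_1)$ is the largest eigenvalue, hence the spectral radius.

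The bookkeeping in (1)--(2) is routine once the factorization of Lemma \ref{polyRBk} is in hand; the main obstacle is (3). There the point is to chain the interlacing inclusion $T_j\hookrightarrow T_k$ with the two rank-one positive semidefinite comparisons $T_k\preceq S_1$ and $S_i\preceq S_1$, and to use $\rho_1(\alpha)$ in its two distinct roles---as the largest eigenvalue of $A_{\alpha}(R)$ for the comparison among the $S_i$, and as a nonnegative number for the comparison with the $T_j$---both of which rest on applying Perron--Frobenius to $A_{\alpha}(R)$. One must also confirm that the global largest eigenvalue really is the spectral radius through Perron--Frobenius applied to $A_{\alpha}(R\{B_k\})$, which is legitimate precisely because $\alpha<1$ keeps the off-diagonal entries on edges positive.
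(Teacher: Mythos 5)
Your treatment of (1) and (2) is exactly the paper's: Theorem \ref{RBk} plus the factorization in Lemma \ref{polyRBk}, with Lemmas \ref{TP} and \ref{MR} identifying the factors as the characteristic polynomials of the $T_j$ and $S_i$, the exponent bookkeeping $n_{k-1}-1=n_{k-1}-n_k$, and simplicity coming from the nonzero codiagonals. For (3) your route differs in the tools, though not in the skeleton. The paper argues that the eigenvalues of each $T_j$ interlace those of each $S_i$ (so only the $S_i$ can carry the spectral radius) and then invokes the fact that the Perron root of an irreducible nonnegative matrix increases when an entry increases, applied to the $(k,k)$ entries $\alpha d_k+\rho_i(\alpha)$. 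You instead write $S_1-S_i=(\rho_1(\alpha)-\rho_i(\alpha))e_ke_k^T$ and $S_1=T_k+\rho_1(\alpha)e_ke_k^T$ and use Weyl monotonicity for a positive semidefinite rank-one perturbation, together with Cauchy interlacing for $T_j$ as a leading principal submatrix of $T_k$. Your version is, if anything, slightly more robust: the matrices $S_i$ need not be nonnegative (the $\rho_i(\alpha)$ can be negative, e.g.\ adjacency eigenvalues when $\alpha=0$), so the paper's appeal to nonnegative-matrix monotonicity implicitly requires first shifting by a multiple of the identity, whereas the Weyl argument applies directly. You also make explicit a step the paper leaves tacit, namely that the spectral radius of $A_{\alpha}(R\{B_k\})$ equals its largest eigenvalue, justified by Perron--Frobenius since the graph is connected and $\beta>0$. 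I see no gaps.
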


\begin{proof}
(1) and (2) are consequences of Theorem \ref{RBk}, Lemma \ref{polyRBk}, Lemma
\ref{TP} and Lemma \ref{MR}. The eigenvalues of each $T_{j}$ interlace the
eigenvalues of any $S_{i}$. Then the spectral radius of $A_{\alpha}%
(R\{B_{k}\})$ is the largest of the spectral radii of the matrices $S_{i}$. We
use the fact that the spectral radius of an irreducible nonnegative matrix
increases when any of its entries increases, to obtain item (3).
\end{proof}

A \emph{Bethe tree} $B(d,k)$ is a rooted tree of $k$ levels in which the root
has degree $d$, the vertices at level $j\ \left(  2\leq j\leq k-1\right)  $
have degree $d+1$ and the vertices at level $k$ have degree equal to $1$
(pendant vertices). Clearly, any Bethe tree is a generalized Bethe tree.
Theorem \ref{specRBk} immediately implies the following corollary.

\begin{corollary}
\label{bethe}Let $\alpha\in\left[  0,1\right)  $, and $\beta=1-\alpha$. Let
$\rho_{1}(\alpha) \ldots\rho_{r}(\alpha)$ be the eigenvalues of $A_{\alpha
}(R)$ in which $\rho_{1}(\alpha)$ is the spectral radius. For $1 \leq j \leq
k$, let $T_{j}$ be the leading principal submatrix of order $j\times j$ of the
$k\times k$ symmetric tridiagonal matrix
\[
T_{k}=\left[
\begin{array}
[c]{ccccc}%
\alpha & \beta\sqrt{d} & 0 &  & 0\\
\beta\sqrt{d} & \alpha\left(  d+1\right)  & \beta\sqrt{d} &  & \\
& \ddots & \ddots & \ddots & \\
&  &  & \alpha\left(  d+1\right)  & \beta\sqrt{d}\\
0 &  & 0 & \beta\sqrt{d} & \alpha d
\end{array}
\right]  .
\]
For $1 \leq i \leq r$, let
\[
S_{i}=\left[
\begin{array}
[c]{ccccc}%
\alpha & \beta\sqrt{d} & 0 &  & 0\\
\beta\sqrt{d} & \alpha\left(  d+1\right)  & \beta\sqrt{d} &  & \\
& \ddots & \ddots & \ddots & \\
&  &  & \alpha\left(  d+1\right)  & \beta\sqrt{d}\\
0 &  & 0 & \beta\sqrt{d} & \alpha d + \rho_{i}(\alpha]
\end{array}
\right]  .
\]

Then

(1)
\[
\mathrm{Spec}(A_{\alpha}(R\{B(d,k)\}))=(\cup_{j=1}^{k-1}\mathrm{Spec}%
(T_{j}))\cup(\cup_{i=1}^{r}\mathrm{Spec}(S_{i})).
\]

(2) For $1 \leq j \leq k-1$, the multiplicity of each eigenvalue of $T_{j}$ as
an eigenvalue of $A_{\alpha}(R\{B(d,k)\})$ is $rd^{k-j-1}(d-1)$, and for $1
\leq i \leq r$, the eigenvalues of $S_{i}$ as eigenvalues of $A_{\alpha
}(R\{B(d,k)\})$ are simple. If some eigenvalues obtained in different matrices
are equal, their multiplicities are added together.

(3) The largest eigenvalue of $S_{1}$ is the spectral radius of $A_{\alpha
}(R\{B(d,k)\})$.
\end{corollary}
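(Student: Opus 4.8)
The plan is to obtain the corollary entirely by specializing Theorem \ref{specRBk} to the case $B_k = B(d,k)$, so that no new spectral argument is required: the only work is to compute the parameters $d_j$, $n_j$, $m_j$ of a Bethe tree and substitute them into Definitions \ref{T} and \ref{M} and into the multiplicity formula. The text has already noted that every Bethe tree is a generalized Bethe tree, which is the sole hypothesis needed to invoke Theorem \ref{specRBk}.

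First I would reconcile the two level-counting conventions in play. The definitions index by $j$, where $d_j$ and $n_j$ refer to the vertices at level $k-j+1$, whereas $B(d,k)$ is described from the root (level $1$) downward. Under this reversal the root sits at $j=k$, the pendant layer at $j=1$, and a vertex at level $\ell$ corresponds to $j = k-\ell+1$. Since each internal vertex of $B(d,k)$ has exactly $d$ children, level $\ell$ contains $d^{\ell-1}$ vertices; hence $n_j = d^{k-j}$, and in particular $n_k = 1$ and $n_1 = d^{k-1}$, consistent with the general setup.

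Next I would read off the degrees: the root has degree $d$, so $d_k = d$; the internal vertices at levels $2,\ldots,k-1$ have degree $d+1$, giving $d_2 = \cdots = d_{k-1} = d+1$; and the pendant vertices give $d_1 = 1$. Substituting these into the codiagonal entries of $T_k$ in Definition \ref{T} turns every $\beta\sqrt{d_j-1}$ (for $2 \le j \le k-1$) and the entry $\beta\sqrt{d_k}$ into $\beta\sqrt{d}$, and turns the diagonal $(\alpha, \alpha d_2, \ldots, \alpha d_{k-1}, \alpha d_k)$ into $(\alpha, \alpha(d+1), \ldots, \alpha(d+1), \alpha d)$; this is precisely the matrix $T_k$ stated in the corollary. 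The matrices $S_i$ follow identically, since they differ from $T_k$ only in the $(k,k)$ entry $\alpha d_k + \rho_i(\alpha) = \alpha d + \rho_i(\alpha)$. With the matrices matched, parts (1) and (3) are immediate transcriptions of the corresponding parts of Theorem \ref{specRBk}.

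Finally, for the multiplicities in part (2) I would simplify the general weight $r(n_j - n_{j+1})$ using $n_j = d^{k-j}$, obtaining $r(d^{k-j} - d^{k-j-1}) = r\,d^{k-j-1}(d-1)$, while the simplicity of the eigenvalues of each $S_i$ transfers verbatim. There is no genuine obstacle here; the one place an error could creep in is the level-reversal bookkeeping between the index $j$ of the definitions and the ``level from the root'' description of $B(d,k)$, so I would fix that correspondence explicitly at the outset. Once it is pinned down, every assertion in the corollary reduces to a direct substitution.
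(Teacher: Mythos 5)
Your proposal is correct and matches the paper's route exactly: the paper states that Corollary \ref{bethe} follows immediately from Theorem \ref{specRBk}, and your substitutions $d_k=d$, $d_2=\cdots=d_{k-1}=d+1$, $n_j=d^{k-j}$, hence $r(n_j-n_{j+1})=rd^{k-j-1}(d-1)$, are precisely the omitted bookkeeping, carried out correctly including the level reversal.
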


\section{Applications to unicyclic graphs. An upper bound on the $A_{\alpha}%
$-spectral radius}

In this section we consider $R=C_{r}$, the cycle on $r$ vertices. It is know
that the eigenvalues of the adjacency matrix of $C_{r}$ are $2\cos(\frac
{2\pi(i-1)}{r})$, $1\leq i\leq r$. Since the cycle $C_{r}$ is a $2-$ regular
graph, it follows that the eigenvalues of $A_{\alpha}(C_{r})$ are
\[
\rho_{i}(\alpha)=2\alpha+2(1-\alpha)\cos(\frac{2\pi(i-1)}{r})
\]
for $i=1,\ldots,r$. Hence the spectral radius of $A_{\alpha}(C_{r})$ is
$\rho_{1}(\alpha)=2$ for any $\alpha\in\lbrack0,1]$.

From Theorem \ref{specRBk}, we have

\begin{corollary}
\label{cycleBk}Let $B_{k}$ be a generalized Bethe tree of $k$ levels. Let
$T_{1},\ldots,T_{k}$ be as in Definitions \ref{T}. For $i=1,\ldots,r$, let
$S_{i}$ be the $k \times k$ matrix
\[
S_{i}=\left[
\begin{array}
[c]{ccccc}%
\alpha & \beta\sqrt{d_{2}-1} & 0 &  & 0\\
\beta\sqrt{d_{2}-1} & \alpha d_{2} & \ddots &  & \\
& \ddots & \ddots & \beta\sqrt{d_{k-1}-1} & \\
&  & \beta\sqrt{d_{k-1}-1} & \alpha d_{k-1} & \beta\sqrt{d_{k}}\\
0 &  & 0 & \beta\sqrt{d_{k}} & \alpha d_{k} + 2 \alpha+ 2(1-\alpha)\cos
(\frac{2 \pi(i-1)}{r})
\end{array}
\right] .
\]
Then

(1)
\begin{equation}
\mathrm{Spec}(A_{\alpha}(C_{r}\{B_{k}\}))=(\cup_{j=1}^{k-1}\mathrm{Spec}%
(T_{j}))\cup(\cup_{i=1}^{r}\mathrm{Spec}(S_{i})).
\end{equation}

(2) For $1 \leq j \leq k-1$, the multiplicity of each eigenvalue of $T_{j}$ as
an eigenvalue of $A_{\alpha}(C_{r}\{B_{k}\})$ is $r(n_{j}-n_{j+1})$, and for
$1 \leq i \leq r$, the eigenvalues of $S_{i}$ as eigenvalues of $A_{\alpha
}(C_{r}\{B_{k}\})$ are simple. If some eigenvalues obtained in different
matrices are equal, their multiplicities are added together.

(3) The largest eigenvalue of
\[
S_{1}=\left[
\begin{array}
[c]{ccccc}%
\alpha & \beta\sqrt{d_{2}-1} & 0 &  & 0\\
\beta\sqrt{d_{2}-1} & \alpha d_{2} & \ddots &  & \\
& \ddots & \ddots & \beta\sqrt{d_{k-1}-1} & \\
&  & \beta\sqrt{d_{k-1}-1} & \alpha d_{k-1} & \beta\sqrt{d_{k}}\\
0 &  & 0 & \beta\sqrt{d_{k}} & \alpha d_{k} + 2
\end{array}
\right] .
\]
is the spectral radius of $A_{\alpha}(C_{r}\{B_{k}\})$.
\end{corollary}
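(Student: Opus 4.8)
The plan is to obtain this corollary as a direct specialization of Theorem \ref{specRBk} to the case $R = C_r$, after recording the $A_{\alpha}$-spectrum of the cycle explicitly. First I would use that $C_r$ is $2$-regular, so $D(C_r) = 2I_r$ and $A_{\alpha}(C_r) = 2\alpha I_r + (1-\alpha)A(C_r)$. Since the adjacency eigenvalues of $C_r$ are $2\cos(2\pi(i-1)/r)$ for $1 \leq i \leq r$, the eigenvalues of $A_{\alpha}(C_r)$ are
\[
\rho_{i}(\alpha) = 2\alpha + 2(1-\alpha)\cos\left(\frac{2\pi(i-1)}{r}\right), \qquad 1 \leq i \leq r.
\]
Because $\cos(\cdot) \leq 1$ with equality at $i = 1$, the spectral radius of $A_{\alpha}(C_r)$ is $\rho_{1}(\alpha) = 2\alpha + 2(1-\alpha) = 2$ for every $\alpha \in [0,1]$, so the labeling hypothesis of Theorem \ref{specRBk} (that $\rho_{1}(\alpha)$ is the spectral radius) is met.

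Next I would invoke Theorem \ref{specRBk}. The matrices $T_1, \ldots, T_{k-1}$ depend only on the parameters $d_2, \ldots, d_k$ of $B_k$, so they are identical to those of Definition \ref{T} and are unaffected by the choice $R = C_r$. The matrices $S_i$ of Definition \ref{M} depend on $R$ only through their bottom-right entry $\alpha d_k + \rho_{i}(\alpha)$; substituting the explicit value of $\rho_{i}(\alpha)$ computed above turns that entry into $\alpha d_k + 2\alpha + 2(1-\alpha)\cos(2\pi(i-1)/r)$, which is precisely the bottom-right entry of the matrices $S_i$ displayed in the statement. With this identification, parts (1) and (2) follow verbatim from the corresponding parts of Theorem \ref{specRBk}, the multiplicity counts $r(n_j - n_{j+1})$ carrying over unchanged.

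For part (3), I would specialize part (3) of Theorem \ref{specRBk}: the largest eigenvalue of $S_1$ is the spectral radius of $A_{\alpha}(C_r\{B_k\})$. Setting $i = 1$ gives $\rho_{1}(\alpha) = 2$, so the bottom-right entry of $S_1$ becomes $\alpha d_k + 2$, matching the matrix $S_1$ displayed in the statement. Since the corollary is essentially a transcription of Theorem \ref{specRBk} with the eigenvalues of $A_{\alpha}(C_r)$ filled in, there is no genuine obstacle here; the only point requiring care is confirming that $\rho_{1}(\alpha) = 2$ is the spectral radius for all $\alpha \in [0,1]$, so that $S_1$ (and not some other $S_i$) carries the spectral radius. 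This is immediate from the fact that cosine attains its maximum value $1$ at $i = 1$.
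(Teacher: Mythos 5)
Your proposal is correct and follows exactly the paper's route: the paper likewise records that $2$-regularity of $C_r$ gives $\rho_i(\alpha)=2\alpha+2(1-\alpha)\cos(2\pi(i-1)/r)$ with spectral radius $\rho_1(\alpha)=2$ for all $\alpha\in[0,1]$, and then reads the corollary off as a direct specialization of Theorem \ref{specRBk}. Nothing is missing.
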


For the case of copies of the Bethe tree $B(d,k)$ attached to $C_{r}$, from
Corollary \ref{bethe}, we have

\begin{corollary}
\label{bethecycle}Let $\alpha\in\left[  0,1\right)  $, and $\beta=1-\alpha$.
For $1 \leq j \leq k$, let $T_{j}$ be as Corollary \ref{bethe}. For $1 \leq i
\leq r$, let
\[
S_{i}=\left[
\begin{array}
[c]{ccccc}%
\alpha & \beta\sqrt{d} & 0 &  & 0\\
\beta\sqrt{d} & \alpha\left(  d+1\right)  & \beta\sqrt{d} &  & \\
& \ddots & \ddots & \ddots & \\
&  &  & \alpha\left(  d+1\right)  & \beta\sqrt{d}\\
0 &  & 0 & \beta\sqrt{d} & \alpha d + 2\alpha+2\beta\cos(\frac{2 \pi(i-1)}{r})
\end{array}
\right]  .
\]
Then

(1) The spectrum of $A_{\alpha}(C_{r}\{B(d,k)\})$ is the multiset union
\[
\mathrm{Spec}(A_{\alpha}(C_{r}\{B(d,k\}))=(\cup_{j=1}^{k-1}\mathrm{Spec}%
(T_{j}))\cup(\cup_{i=1}^{r}\mathrm{Spec}(S_{i})).
\]

(2) For $1 \leq j \leq k-1$, the multiplicity of each eigenvalue of $T_{j}$ as
an eigenvalue of $A_{\alpha}(C_{r}\{B(d,k)\})$ is $rd^{k-j-1}(d-1)$, and for
$1 \leq i \leq r$, the eigenvalues of $S_{i}$ as eigenvalues of $A_{\alpha
}(C_{r}\{B(d,k)\})$ are simple. If some eigenvalues obtained in different
matrices are equal, their multiplicities are added together.

(3) The largest eigenvalue of
\[
S_{i}=\left[
\begin{array}
[c]{ccccc}%
\alpha & \beta\sqrt{d} & 0 &  & 0\\
\beta\sqrt{d} & \alpha\left(  d+1\right)  & \beta\sqrt{d} &  & \\
& \ddots & \ddots & \ddots & \\
&  &  & \alpha\left(  d+1\right)  & \beta\sqrt{d}\\
0 &  & 0 & \beta\sqrt{d} & \alpha d + 2
\end{array}
\right]  .
\]
is the spectral radius of $A_{\alpha}(C_{r}\{B(d,k)\})$.
\end{corollary}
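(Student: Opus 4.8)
The plan is to specialize Corollary \ref{bethe} to the case $R = C_r$. Since $C_r$ is a connected graph on $r$ vertices, Corollary \ref{bethe} applies directly, and the entire statement reduces to substituting the already-computed eigenvalues of $A_{\alpha}(C_r)$ into the matrices $S_i$, leaving the matrices $T_j$ untouched.

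First I would recall, as established in the text immediately preceding this corollary, that the eigenvalues of $A_{\alpha}(C_r)$ are
\[
\rho_i(\alpha) = 2\alpha + 2\beta\cos\left(\frac{2\pi(i-1)}{r}\right), \quad i = 1, \ldots, r,
\]
and that $\rho_1(\alpha) = 2$ is the spectral radius. The latter holds because $\cos(2\pi(i-1)/r) \leq 1$ for all $i$, with equality at $i = 1$, while $\beta = 1-\alpha \geq 0$; hence $\rho_i(\alpha) \leq 2\alpha + 2\beta = 2$ for every $i$, so $\rho_1(\alpha)=2$ dominates.

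Next, substituting $\rho_i(\alpha)$ into the $(k,k)$-entry $\alpha d + \rho_i(\alpha)$ of the matrix $S_i$ of Corollary \ref{bethe} yields exactly the matrix $S_i$ displayed in the present statement, whose last diagonal entry is $\alpha d + 2\alpha + 2\beta\cos(2\pi(i-1)/r)$. The matrices $T_j$ are inherited verbatim. With this identification, parts (1) and (2) follow at once from parts (1) and (2) of Corollary \ref{bethe}, noting that the multiplicity $rd^{k-j-1}(d-1)$ attached to each eigenvalue of $T_j$ depends only on $d$, $k$ and $r$, and is therefore unchanged by the choice $R = C_r$.

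Finally, for part (3), since $\rho_1(\alpha) = 2$ is the spectral radius of $A_{\alpha}(C_r)$, the matrix $S_1$ has $(k,k)$-entry equal to $\alpha d + 2$, and part (3) of Corollary \ref{bethe} then identifies the largest eigenvalue of this $S_1$ with the spectral radius of $A_{\alpha}(C_r\{B(d,k)\})$. I do not anticipate any essential obstacle here: the only point needing care is confirming that $\rho_1(\alpha)=2$ dominates the remaining $\rho_i(\alpha)$, which is immediate from the monotonicity of the cosine expression above.
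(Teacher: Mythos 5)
Your proposal is correct and follows exactly the paper's route: the paper derives this corollary by specializing Corollary \ref{bethe} to $R=C_r$, substituting the eigenvalues $\rho_i(\alpha)=2\alpha+2\beta\cos(\frac{2\pi(i-1)}{r})$ of $A_\alpha(C_r)$ (with $\rho_1(\alpha)=2$ the spectral radius) into the last diagonal entry of each $S_i$. Your added justification that $\rho_1(\alpha)=2$ dominates is a correct, if routine, verification of a fact the paper states without comment.
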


Let $\rho(M)$ be the spectral radius of the matrix $M$. It is known that if
$G$ is a subgraph of $H$ then $\rho(L(G)) \leq\rho(L(H))$, $\rho(Q(G))
\leq\rho(Q(H))$ and $\rho(A(G)) \leq\rho(A(H))$.

In \cite{Ste03} Stevanovi\'{c} proves that for a tree $\mathcal{T}$ with
largest vertex degree $\Delta,$%
\[
\rho(L(T)) <\Delta+2\sqrt{\Delta-1}%
\]
and%
\[
\rho(A(T)) <2\sqrt{\Delta-1}.
\]

In \cite{Hu} Hu proves that if $G$ is a unicyclic graph with largest vertex
degree $\Delta$ then%
\begin{equation}
\label{huL}\rho(L(G)) \leq\Delta+2\sqrt{\Delta-1}%
\end{equation}
with equality if and only if $G$ is the cycle $C_{n}$ whenever $n$ is even,
and%
\begin{equation}
\label{huA}\rho(A(G)) \leq2\sqrt{\Delta-1}%
\end{equation}
with equality if and only if $G$ is the cycle $C_{n}.$

From now on, let $G=(V(G),E(G))$ be a unicyclic graph with largest vertex
degree $\Delta$.

We recall that the height of a rooted tree is the largest distance from its
root to a pendant vertex. The following invariant for a unicyclic graph $G$
was introduced in \cite{Ro08}.

\begin{definition}
\label{dd} Let $G$ be a unicyclic graph. Let $C_{r}$ be the unique cycle in
$G$ and let $v_{1},v_{2},\ldots,v_{r}$ be the vertices of $C_{r}.$ The graph
$G - E(G)$ is a forest of $r$ rooted trees $T_{1}, T_{2}, \ldots,T_{r}$ with
roots $v_{1},\ldots,v_{r},$ respectively. For $i=1,2,\ldots,r$, let $h(T_{i})$
be the height of the tree $T_{i}$. Let%
\[
k(G) =\max\left\{  h(T_{i}):1\leq i\leq r\right\}  +1.
\]
We say that $k(G)$ is the height of the unicyclic $G$.
\end{definition}

\begin{example}
Let $G$ be the unicyclic graph:
\begin{align*}
\begin{tikzpicture} \tikzstyle{every node}=[draw,circle,fill=black,minimum size=4pt, inner sep=0pt] \draw (-0.25,0) node (1) [label=above:$v_1$] {} (1.25,1) node (2) [label=left:$v_2$] {} (0.75,2) node (3) [label=below:$v_3$] {} (-1,2) node (4) [label=below:$v_4$] {} (-1.5,1) node (5) [label=right:$v_5$] {} (-0.25,-0.5) node (6) [label=right:$$] {} (-1.25,-1) node (7) [label=below:$$] {} (0.75,-1) node (8) [label=right:$$] {} (0.25,-1.5) node (9) [label=below:$$] {} (1.25,-1.5) node (10) [label=below:$$] {} (1.75,0.5) node (11) [label=above:$$] {} (1.75,1.5) node (12) [label=above:$$] {} (2.25,0.5) node (13) [label=above:$$] {} (2.25,1.5) node (14) [label=above:$$] {} (1.25,2.5) node (15) [label=above:$$] {} (0.75,2.5) node (16) [label=above:$$] {} (0.25,2.5) node (17) [label=above:$$] {} (-1,2.5) node (18) [label=above:$$] {} (-1.4,2.25) node (19) [label=below:$$] {} (-1.8,2.5) node (20) [label=below:$$] {} (-2.2,2.75) node (21) [label=below:$$] {} (-2.6,3) node (22) [label=below:$$] {} (-2,1) node (23) [label=below:$$] {} (-1.75,0.5) node (24) [label=right:$$] {} (-2.5,0.25) node (25) [label=below:$$] {} (-2,0) node (26) [label=below:$$] {}; \draw (1)--(2); \draw (2)--(3); \draw (3)--(4); \draw (4)--(5); \draw (1)--(5); \draw (1)--(6); \draw (6)--(7); \draw (6)--(8); \draw (8)--(9); \draw (8)--(10); \draw (2)--(11); \draw (2)--(12); \draw (11)--(13); \draw (12)--(14); \draw (3)--(15); \draw (3)--(16); \draw (3)--(17); \draw (4)--(18); \draw (4)--(19); \draw (19)--(20);\draw (20)--(21); \draw (21)--(22); \draw (5)--(23); \draw (5)--(24); \draw (24)--(25);\draw (24)--(26); \end{tikzpicture}
\end{align*}
Then $\Delta(G)=5$ and the height of $G$ is
\[
k(G)=max\{3,2,1,4,2\}+1=5,
\]

\end{example}

In \cite{Ro08} the upper bounds given in (\ref{huL}) and (\ref{huA}) are
improved as follows:

\begin{lemma}
\label{zero} If $G$ is a unicyclic graph then%
\begin{equation}
\label{rxL}\rho(L(G)) < \Delta+2\sqrt{\Delta-1}\cos\frac{\pi}{2k(G)+1}%
\end{equation}
for $\Delta\geq3$ and
\begin{equation}
\label{rxA}\rho(A(G)) < 2\sqrt{\Delta-1}\cos\frac{\pi}{2k(G)+1}%
\end{equation}
for $\Delta\geq4$ or $\Delta= 3$ and $k(G) \geq4$.
\end{lemma}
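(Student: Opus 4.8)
The plan is to bound $\rho(A(G))$ and $\rho(L(G))$ by comparison with an extremal unicyclic graph whose spectral radius is computed exactly by Corollary \ref{bethecycle} (and its Laplacian analogue). First I would build that comparison graph. Write $C_r$ for the unique cycle of $G$, and let $T_1,\dots,T_r$ be the rooted trees of $G-E(C_r)$, so that $k=k(G)=\max_i h(T_i)+1$. Because each cycle vertex already carries two cycle edges, its degree inside its tree is at most $\Delta-2$, while every non-root vertex has degree at most $\Delta$. Let $B_k$ be the generalized Bethe tree on $k$ levels with $d_1=1$, $d_2=\dots=d_{k-1}=\Delta$ and $d_k=\Delta-2$; this is the fullest rooted tree of height $k-1$ compatible with those degree constraints, so each $T_i$ embeds into a copy of $B_k$ and hence $G$ is a subgraph of $H:=C_r\{B_k\}$, a unicyclic graph of maximum degree $\Delta$. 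By the subgraph monotonicity recalled before the lemma, $\rho(A(G))\le\rho(A(H))$ and $\rho(L(G))\le\rho(L(H))$, so it suffices to bound the right-hand sides.

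Next I would evaluate these radii explicitly. For the adjacency bound I set $\alpha=0$ in Corollary \ref{cycleBk}: $\rho(A(H))$ equals the largest eigenvalue of the $k\times k$ symmetric tridiagonal matrix $S_1$, which for $\alpha=0$ has zero diagonal except for a last entry equal to $2$, off-diagonal entries $\sqrt{d_2-1}=\dots=\sqrt{d_{k-1}-1}=\sqrt{\Delta-1}$, and last off-diagonal entry $\sqrt{d_k}=\sqrt{\Delta-2}$. For the Laplacian I would use the parallel reduction for $L(C_r\{B_k\})$, valid because $L(R\{H\})=I_r\otimes L(H)+L(R)\otimes E$ exactly as in Theorem \ref{general}; this yields a $k\times k$ matrix with diagonal $1,\Delta,\dots,\Delta$ (plus a boundary correction) and off-diagonals $\sqrt{\Delta-1}$. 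I then compare $\rho(S_1)$ with the largest eigenvalue of the clean matrix $K_k=\sqrt{\Delta-1}\,J_k$, where $J_k$ is tridiagonal with unit off-diagonals, zero diagonal, and last diagonal entry $1$; folding the path $P_{2k}$ across its center gives $\rho(J_k)=2\cos\frac{\pi}{2k+1}$, hence $\rho(K_k)=2\sqrt{\Delta-1}\cos\frac{\pi}{2k+1}$, exactly the target value. When $\sqrt{\Delta-1}\ge 2$, the matrix $S_1$ is entrywise dominated by $K_k$ (its last off-diagonal $\sqrt{\Delta-2}$ is strictly smaller and its boundary diagonal $2$ is at most $\sqrt{\Delta-1}$), so Perron--Frobenius monotonicity for irreducible nonnegative matrices gives $\rho(S_1)<\rho(K_k)$, the inequality being strict because at least one entry is strictly smaller. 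The Laplacian estimate follows the same template with $\Delta I+K_k$ as comparison matrix.

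The hard part is precisely the small-degree regime, and it is what produces the hypotheses of the lemma. When $\Delta\in\{3,4\}$ one has $\sqrt{\Delta-1}<2$, so the boundary diagonal entry $2$ of $S_1$ exceeds the corresponding entry of $K_k$ and the crude entrywise domination breaks down. Here I would instead estimate $\rho(S_1)$ directly from the three-term recurrence for $\phi_{S_1}(\lambda)=Q_1(\lambda)=P_k-2P_{k-1}$ (Lemma \ref{MR} with $\rho_1(\alpha)=2$), proving by induction that $Q_1(\lambda)>0$ for all $\lambda\ge 2\sqrt{\Delta-1}\cos\frac{\pi}{2k+1}$. The localized boundary perturbation at the last vertex costs an amount that must be absorbed by the strict slack $\sqrt{\Delta-2}<\sqrt{\Delta-1}$ in the final off-diagonal, and this absorption only succeeds once $k$ is large enough, forcing the extra requirement $k\ge 4$ when $\Delta=3$. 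The same phenomenon is milder for the Laplacian, where the diagonal is of order $\Delta$ everywhere so the boundary term is a relatively small perturbation, which is why the Laplacian bound needs only $\Delta\ge 3$. Carrying out this boundary estimate carefully, rather than the routine eigenvalue comparison, is the crux of the argument.
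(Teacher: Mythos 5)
First, a framing point: the paper does not prove Lemma \ref{zero} at all --- it is quoted from \cite{Ro08} as a known result --- so there is no internal proof to compare against. Your strategy (embed $G$ into $C_r\{B_{k}\}$ with $d_1=1$, $d_2=\cdots=d_{k-1}=\Delta$, $d_k=\Delta-2$, reduce to the $k\times k$ tridiagonal matrix $S_1$, and compare against $\sqrt{\Delta-1}\,J_k$, whose spectral radius is $2\sqrt{\Delta-1}\cos\frac{\pi}{2k+1}$) is exactly the strategy of \cite{Ro08}, and it is also the skeleton of the paper's own Lemma \ref{extra} and of its final theorem for $A_\alpha$. Your handling of $\Delta\ge5$ by entrywise Perron--Frobenius domination is complete and correct, and the folding of $P_{2k}$ is a nice self-contained substitute for the citation of \cite{Ko}.

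The genuine gap is that the cases which actually generate the hypotheses of the lemma are announced rather than proved. For (\ref{rxA}) with $\Delta=4$, or with $\Delta=3$ and $k\ge4$, you say you would show $Q_1(\lambda)>0$ for $\lambda\ge 2\sqrt{\Delta-1}\cos\frac{\pi}{2k+1}$ and that ``carrying out this boundary estimate carefully is the crux''; but that crux is precisely what a proof must contain, and it is not routine --- it is the content of the paper's Lemma \ref{extra}, where one writes $x_k(\rho(Y))=\bigl(\sqrt{\Delta-1}-2+\frac{\rho(Y)-\sqrt{\Delta-1}}{\Delta-1}\bigr)x_{k-1}(\rho(Y))$, checks the coefficient is positive in each case, and uses interlacing to get $x_{k-1}(\rho(Y))>0$. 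More seriously, your claim that the Laplacian bound (\ref{rxL}) ``follows the same template'' and is ``milder'' is incorrect as stated: after the Laplacian reduction the corner diagonal entry is $(\Delta-2)+4=\Delta+2$ (the largest Laplacian eigenvalue of $C_r$ being at most $4$, not $2$), while your comparison matrix $\Delta I+\sqrt{\Delta-1}\,J_k$ has corner entry $\Delta+\sqrt{\Delta-1}$; so for $\Delta\in\{3,4\}$ the obstruction $2>\sqrt{\Delta-1}$ reappears and entrywise domination fails exactly as in the adjacency case. Since (\ref{rxL}) is claimed for all $\Delta\ge3$ and all $k(G)$ --- a regime in which the adjacency bound is actually false, as the paper notes --- the Laplacian case needs its own boundary estimate, which your proposal does not supply.
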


It is well known \cite{CvRoSi07} that
\[
\rho(L(G)) \leq\rho(Q(G))
\]
with equality if and only if $G$ is a bipartite graph. In \cite{hansen} upper
bounds on $\rho(L(G))$ and $\rho(Q(G))$ are given and it is proved that many
but not all upper bounds on $\rho(L(G))$ are also bounds for $\rho(Q(G))$. In
\cite{CoPiRo} it is shown that if $G$ is a unicyclic graph, the upper bound on
$\rho(L(G))$ in $(\ref{rxL})$ is also an upper bound on $\rho(Q(G))$.

\begin{lemma}
\label{extra} Let $\Delta\geq3$. Let
\[
X=\left[
\begin{array}
[c]{ccccc}%
0 & \sqrt{\Delta-1} &  &  & \\
\sqrt{\Delta-1} & 0 & \ddots &  & \\
& \ddots & \ddots & \sqrt{\Delta-1} & \\
&  & \sqrt{\Delta-1} & 0 & \sqrt{\Delta-2}\\
&  &  & \sqrt{\Delta-2} & 2\\
&  &  &  &
\end{array}
\right]
\]
be a tridiagonal matrix of order $k\times k$. If $\Delta\geq4$ or $\Delta= 3$
with $k \geq4$ then
\begin{equation}
\label{XY}\rho(X) < 2 \sqrt{\Delta- 1} \cos\frac{\pi}{2 k +1}.
\end{equation}

\end{lemma}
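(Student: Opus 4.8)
The plan is to convert the strict eigenvalue bound into the positivity of a single determinant, and then into a clean scalar inequality. Write $a=\sqrt{\Delta-1}$, $b=\sqrt{\Delta-2}$, $\theta=\frac{\pi}{2k+1}$, and $\mu=2\sqrt{\Delta-1}\cos\frac{\pi}{2k+1}=2a\cos\theta$. Since $\Delta\geq 3$, both $a$ and $b$ are positive, so $X$ is a nonnegative, symmetric, irreducible tridiagonal matrix; by Perron--Frobenius its spectral radius equals its largest eigenvalue. Hence $\rho(X)<\mu$ is equivalent to $\mu I-X$ being positive definite, which by the leading-principal-minor (Sylvester) criterion is equivalent to positivity of all leading principal minors $D_1,\ldots,D_k$ of $\mu I-X$.

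First I would compute these minors. For $j=0,1,\ldots,k-1$ the $j$-th leading principal submatrix of $\mu I-X$ is tridiagonal with constant diagonal $\mu$ and constant codiagonal $-a$, so its determinant satisfies $D_0=1$, $D_1=\mu$, and $D_j=\mu D_{j-1}-a^2D_{j-2}$. Solving this Chebyshev-type recursion with $\mu/(2a)=\cos\theta$ gives the closed form $D_j=a^{j}\frac{\sin((j+1)\theta)}{\sin\theta}$. Because $\theta=\frac{\pi}{2k+1}$, we have $(j+1)\theta\leq k\theta<\frac{\pi}{2}$ for every $j\leq k-1$, so $D_0,\ldots,D_{k-1}$ are all strictly positive. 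The whole statement therefore reduces to proving $D_k>0$, where the perturbed last row and column give $D_k=(\mu-2)D_{k-1}-b^2D_{k-2}$.

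The main computation is to simplify $D_k$. Substituting the closed forms for $D_{k-1},D_{k-2}$ and using $b^2=a^2-1$, after factoring out the positive quantity $a^{k-2}/\sin\theta$ the sign of $D_k$ equals the sign of $(a^2-2a)\sin(k\theta)+\sin((k-1)\theta)$. Here the identity $(2k+1)\theta=\pi$ does the decisive work twice: it gives $\sin((k+1)\theta)=\sin(k\theta)$, which collapses the expansion $2\cos\theta\sin(k\theta)=\sin((k+1)\theta)+\sin((k-1)\theta)$, and, via $k\theta=\frac{\pi}{2}-\frac{\theta}{2}$, it yields the clean ratio $\frac{\sin((k-1)\theta)}{\sin(k\theta)}=\frac{\cos(3\theta/2)}{\cos(\theta/2)}=2\cos\theta-1$. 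Dividing through by $\sin(k\theta)>0$, the inequality $D_k>0$ becomes $a^2-2a+2\cos\theta-1>0$; completing the square and using $1-\cos\theta=2\sin^2(\theta/2)$, this is exactly
\[
(a-1)^2>4\sin^2\tfrac{\theta}{2},\qquad\text{equivalently}\qquad \sqrt{\Delta-1}-1>2\sin\frac{\pi}{2(2k+1)} .
\]

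Finally I would verify this last inequality under each hypothesis. Its right-hand side is strictly decreasing in $k$, so it suffices to check the smallest admissible $k$ in each regime. For $\Delta\geq 4$ one has $\sqrt{\Delta-1}-1\geq\sqrt{3}-1>2\sin\frac{\pi}{10}$, which settles all $k\geq 2$; for $\Delta=3$ the requirement is $\sqrt{2}-1>2\sin\frac{\pi}{2(2k+1)}$, and a direct numerical check shows it first holds at $k=4$ (where $2\sin\frac{\pi}{18}<\sqrt{2}-1$) and fails at $k=3$. This reproduces precisely the dichotomy in the statement. I expect the only delicate point to be the tightness of the $\Delta=3$ threshold $k\geq 4$; every other step is forced by the two trigonometric identities arising from $(2k+1)\theta=\pi$.
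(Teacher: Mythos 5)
Your proposal is correct, and it takes a genuinely different route from the paper. The paper introduces a comparison matrix $Y$ (replace the last codiagonal entry $\sqrt{\Delta-2}$ of $X$ by $\sqrt{\Delta-1}$ and the last diagonal entry $2$ by $\sqrt{\Delta-1}$), invokes the Kouachi reference for the exact value $\rho(Y)=2\sqrt{\Delta-1}\cos\frac{\pi}{2k+1}$, disposes of $\Delta\geq5$ by entrywise monotonicity of the Perron root ($X\leq Y$ there), and then treats $\Delta=4$ and $\Delta=3$ separately by showing $x_{k}(\rho(Y))>0$ through an interlacing argument and case-by-case numerical estimates. You instead apply Sylvester's criterion to $\mu I-X$ with $\mu=2\sqrt{\Delta-1}\cos\frac{\pi}{2k+1}$: the first $k-1$ leading minors are Chebyshev expressions $a^{j}\sin((j+1)\theta)/\sin\theta$, automatically positive since $k\theta<\pi/2$, and the identity $(2k+1)\theta=\pi$ collapses $D_{k}$ to the single scalar condition $\sqrt{\Delta-1}-1>2\sin\frac{\pi}{2(2k+1)}$. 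I checked your algebra: the reduction of $D_k$ to $(a^{2}-2a)\sin(k\theta)+\sin((k-1)\theta)$ and the ratio $\sin((k-1)\theta)/\sin(k\theta)=2\cos\theta-1$ are both correct, and the numerical endpoints ($\sqrt{3}-1>2\sin\frac{\pi}{10}$ for $\Delta\geq4$, $k\geq2$; $\sqrt{2}-1>2\sin\frac{\pi}{18}$ for $\Delta=3$, $k=4$) hold. Your approach buys three things: it is self-contained (no external eigenvalue formula for $Y$ is needed, since $\mu$ enters only as a test value), it treats all cases uniformly with one inequality that is monotone in both $\Delta$ and $k$, and it is an equivalence, so it also certifies that the bound genuinely fails for $\Delta=3$, $k\leq3$, matching the paper's later remark. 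One minor point common to both arguments: the shape of $X$ and the application force $k\geq2$, which you use implicitly when checking the base case for $\Delta\geq4$; it is worth stating.
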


\begin{proof}
Let
\[
Y=\left[
\begin{array}
[c]{ccccc}%
0 & \sqrt{\Delta-1} &  &  & \\
\sqrt{\Delta-1} & 0 & \ddots &  & \\
& \ddots & \ddots & \sqrt{\Delta-1} & \\
&  & \sqrt{\Delta-1} & 0 & \sqrt{\Delta-1}\\
&  &  & \sqrt{\Delta-1} & \sqrt{\Delta-1}\\
&  &  &  &
\end{array}
\right]
\]
be a symmetric tridiagonal matrix of order $k\times k$. It is known \cite{Ko}
that
\[
\rho(Y)=2\sqrt{\Delta-1}\cos\frac{\pi}{2k+1}.
\]
Hence proving (\ref{XY}) is equivalent to proving that $\rho(X)<\rho(Y)$.
Suppose that $\Delta\geq5$. Then $X\leq Y$ with strict inequalities in the
entries $(k-1,k)$ and $(k,k-1)$. Since the spectral radius of an irreducible
nonnegative matrix increases when any of its entries increases, we have
$\rho(X)<\rho(Y)$. Thus, (\ref{XY}) has been proved for $\Delta\geq5$. For
$j=1,2,\ldots,k$, let $x_{j}(\lambda)$ and $y_{j}(\lambda)$ be the
characteristic polynomials of the $j\times j$ leading principal submatrices of
$X$ and $Y$, respectively. Notice that $x_{j}(\lambda)$ and $y_{j}(\lambda)$
are identical polynomials for $j=1,2,\ldots,k-1$. Using the three-term
recursion formula for symmetric tridiagonal matrices, we have
\begin{equation}
x_{k}(\lambda)=(\lambda-2)x_{k-1}(\lambda)-(\Delta-2)x_{k-2}(\lambda
)\label{xk}%
\end{equation}
and
\begin{equation}
y_{k}(\lambda)=(\lambda-\sqrt{\Delta-1})x_{k-1}(\lambda)-(\Delta
-1)x_{k-2}(\lambda).\label{yk}%
\end{equation}
Subtracting $(\ref{yk})$ from $(\ref{xk}),$ we obtain%
\begin{equation}
x_{k}(\lambda)-y_{k}(\lambda)=(\sqrt{\Delta-1}-2)x_{k-1}(\lambda
)+x_{k-2}(\lambda).\label{ddd}%
\end{equation}
Since $X$ and $Y$ are symmetric tridiagonal matrices with nonzero codiagonal
entries, their eigenvalues are simple. Let
\[
\alpha_{k}<\alpha_{k-1}<\ldots<\alpha_{2}<\rho(X)
\]
be the eigenvalues of $X$. Then
\[
x_{k}(\lambda)=(\lambda-\rho(X))\prod_{j=2}^{k}(\lambda-\alpha_{j}).
\]
Let $\beta_{1}$ be the largest zero of the identical polynomials
$x_{k-1}(\lambda)$ and $y_{k-1}(\lambda).$ Since the zeros of these
polynomials strictly interlace the zeros of the polynomials $x_{k}(\lambda)$
and $y_{k}(\lambda),$ we have $\alpha_{2}<\beta_{1}<\rho(X)$ and $\beta
_{1}<\rho(Y).$ Therefore $\alpha_{2}<\rho(Y),$ $x_{k-1}(\rho(Y))>0$ and%
\[
x_{k}(\rho(Y))=(\rho(Y)-\rho(X))c,
\]
where
\[
c=\prod_{j=2}^{k}(\rho(Y)-\alpha_{j})>0.
\]
Thus in order to conclude that $\rho(X)<\rho(Y)$, we need to show that
$x_{k}(\rho(Y))>0$. From $(\ref{yk})$ and $(\ref{ddd}),$%
\[
y_{k}(\rho(Y))=0=(\rho(Y)-\sqrt{\Delta-1})x_{k-1}(\rho(Y))-(\Delta
-1)x_{k-2}(\rho(Y))
\]
and%
\[
x_{k}(\rho(Y))=(\sqrt{\Delta-1}-2)x_{k-1}(\rho(Y))+x_{k-2}(\rho(Y)).
\]
Then%
\begin{equation}
x_{k}(\rho(Y))=(\sqrt{\Delta-1}-2+\frac{\rho(Y)-\sqrt{\Delta-1}}{\Delta
-1})x_{k-1}(\rho(Y)).\label{xb}%
\end{equation}
Let $\Delta=4.$ From $(\ref{xb})$%
\begin{align*}
x_{k}(\rho(Y))  & =(\sqrt{3}-2+\frac{2\sqrt{3}\cos\frac{\pi}{2k+1}-\sqrt{3}%
}{3})x_{k-1}(\rho(Y))\\
& =(\frac{2\sqrt{3}}{3}-2+\frac{2\sqrt{3}}{3}\cos\frac{\pi}{2k+1})x_{k-1}%
(\rho(Y))\\
& \geq(\frac{2\sqrt{3}}{3}-2+\frac{2\sqrt{3}}{3}\cos\frac{\pi}{5})x_{k-1}%
(\rho(Y))>0.08x_{k-1}(\rho(Y))>0.
\end{align*}
It remains to prove (\ref{XY}) for $\Delta=3$ and $k\geq4.$ From $(\ref{xb})$%
\begin{align*}
x_{k}(\rho(Y))  & =(\sqrt{2}-2+\frac{2\sqrt{2}\cos\frac{\pi}{2k+1}-\sqrt{2}%
}{2})x_{k-1}(\rho(Y))\\
& =(\frac{\sqrt{2}}{2}-2+\sqrt{2}\cos\frac{\pi}{2k+1})x_{k-1}(\rho(Y))\\
& \geq(\frac{\sqrt{2}}{2}-2+\sqrt{2}\cos\frac{\pi}{9})x_{k-1}\rho
(Y))>0.03x_{k-1}(\rho(Y))>0.
\end{align*}
The proof of Lemma \ref{extra} is complete.
\end{proof}

At this point, we observe that if $G$ is an induced subgraph of $H$ then
$A_{\alpha}(G) \leq A_{\alpha}(H)$.

The next theorem gives an upper bound on the spectral radius of $A_{\alpha
}(G)$ in terms of the largest degree and height of the unicyclic graph $G$.

\begin{theorem}
Let $G$ be a unicyclic graph with $\Delta\geq3$. Let $\alpha\in[0,1)$. If
$\Delta\geq4$ or $\Delta= 3$ and $k(G) \geq4$ then
\begin{equation}
\label{upper}\rho(A_{\alpha}(G)) < \alpha\Delta+ 2 (1-\alpha) \sqrt{\Delta
-1}\cos\frac{\pi}{2k(G)+1}%
\end{equation}

\end{theorem}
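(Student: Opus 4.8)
The plan is to bound $\rho(A_\alpha(G))$ from above by the $A_{\alpha}$-spectral radius of a suitable unicyclic host graph of the form $C_r\{B_k\}$, whose spectral radius can be read off from Corollary \ref{cycleBk}, and then to estimate that spectral radius using Lemma \ref{extra}. Write $k=k(G)$, let $C_r$ be the unique cycle of $G$, and let $T_1,\dots,T_r$ be the rooted trees of $G-E(C_r)$, so that each $h(T_i)\le k-1$. First I would introduce the generalized Bethe tree $B_k$ on $k$ levels whose root has degree $\Delta-2$ and whose vertices at levels $2,\dots,k-1$ have degree $\Delta$ (the leaves having degree $1$); this tree exists precisely because $\Delta\ge 3$. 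In the host graph $H=C_r\{B_k\}$ every cycle vertex then has degree $(\Delta-2)+2=\Delta$ and every internal tree vertex has degree $\Delta$, so the maximum degree of $H$ is exactly $\Delta$.

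Next I would embed $G$ into $H$ as an induced subgraph: map the cycle of $G$ onto the cycle of $H$ and each rooted tree $T_i$ onto a rooted subtree of the $i$-th copy of $B_k$. This is possible because the root $v_i$ of $T_i$ has at most $\Delta-2$ children, every internal vertex of $T_i$ has at most $\Delta-1$ children, and $h(T_i)\le k-1$, which are exactly the branching and the height of $B_k$. Using the observation recorded just before the theorem that an induced subgraph $G$ of $H$ satisfies $A_\alpha(G)\le A_\alpha(H)$, together with the monotonicity of the spectral radius of a nonnegative matrix under entrywise domination, I obtain $\rho(A_\alpha(G))\le\rho(A_\alpha(H))$. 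By Corollary \ref{cycleBk}(3), $\rho(A_\alpha(H))=\rho(S_1)$, where $S_1$ is the $k\times k$ matrix with diagonal $(\alpha,\alpha\Delta,\dots,\alpha\Delta,\alpha(\Delta-2)+2)$ and codiagonal $(\beta\sqrt{\Delta-1},\dots,\beta\sqrt{\Delta-1},\beta\sqrt{\Delta-2})$.

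The key algebraic step is to split $S_1=\alpha\widehat{D}+\beta X$, where $\widehat{D}=\mathrm{diag}(1,\Delta,\dots,\Delta)$ and $X$ is exactly the matrix of Lemma \ref{extra}; a direct comparison of entries confirms this, the only point needing care being that the $(k,k)$ entry $\alpha(\Delta-2)+2=\alpha\Delta+2\beta$ splits as $\alpha\cdot\Delta+\beta\cdot 2$. Since $\widehat{D}$ and $X$ are symmetric, Weyl's inequality gives $\rho(S_1)=\lambda_{\max}(S_1)\le\alpha\,\lambda_{\max}(\widehat{D})+\beta\,\lambda_{\max}(X)=\alpha\Delta+\beta\rho(X)$, using $\lambda_{\max}(\widehat{D})=\Delta$ and, by Perron--Frobenius, $\lambda_{\max}(X)=\rho(X)$. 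Finally Lemma \ref{extra}, whose hypotheses ($\Delta\ge 4$, or $\Delta=3$ and $k\ge 4$) coincide with those of the theorem, yields $\rho(X)<2\sqrt{\Delta-1}\cos\frac{\pi}{2k+1}$; since $\beta=1-\alpha>0$, multiplying by $\beta$ and adding $\alpha\Delta$ produces the claimed strict bound $\rho(A_\alpha(G))\le\rho(S_1)<\alpha\Delta+2(1-\alpha)\sqrt{\Delta-1}\cos\frac{\pi}{2k(G)+1}$.

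I expect the main obstacle to lie in the bookkeeping of the first two steps rather than in the final estimate. One must choose the generalized Bethe tree with the correct degree sequence (root degree $\Delta-2$, not $\Delta-1$) so that the host graph has maximum degree exactly $\Delta$ and not $\Delta+1$, and one must check carefully that $G$ genuinely sits inside $C_r\{B_k\}$ as an induced subgraph so that the domination $A_\alpha(G)\le A_\alpha(H)$ is legitimate. Once the host graph is chosen correctly, the decomposition $S_1=\alpha\widehat{D}+\beta X$ is precisely what reduces the general $A_\alpha$ statement to the purely adjacency-type Lemma \ref{extra}, and the concluding inequality then follows immediately from Weyl's inequality.
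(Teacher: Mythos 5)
Your proof is correct and follows essentially the same route as the paper: the same host graph $C_r\{B_{k(G)}\}$ with root degree $\Delta-2$ and internal degrees $\Delta$, the same reduction via Corollary~\ref{cycleBk}(3) to the spectral radius of $S_1$, and the same final appeal to Lemma~\ref{extra}. The only (harmless) difference is that you derive $\rho(S_1)\le\alpha\Delta+\beta\rho(X)$ by applying Weyl's inequality to the exact splitting $S_1=\alpha\,\mathrm{diag}(1,\Delta,\dots,\Delta)+\beta X$, whereas the paper first dominates $S_1$ entrywise by $\alpha\Delta I+\beta X$ and then evaluates the spectral radius of that sum directly.
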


\begin{proof}
Let $C_{r}$ be the unique cycle in $G$. Let $H =B_{k(G)}$ be a generalized
Bethe tree with vertex degrees
\[
d_{1}=1, d_{2}=\Delta,\ldots,d_{k(G)-1}=\Delta, d_{k(G)}=\Delta-2
\]
from the pendant vertices to the root. Then $G$ is an induced subgraph of
$C_{r}\{H\}$. Hence $\rho(A_{\alpha}(G)) \leq\rho(A_{\alpha}(C_{r}\{H\}))$.
Let $\beta= 1-\alpha$. From Corollary \ref{cycleBk}, the spectral radius of
$A_{\alpha}(C_{r}\{H\})$ is the spectral radius of the $k(G) \times k(G)$
matrix
\[
S_{1} = \left[
\begin{array}
[c]{ccccc}%
\alpha & \beta\sqrt{\Delta-1} &  &  & \\
\beta\sqrt{\Delta-1} & \alpha\Delta & \ddots &  & \\
& \ddots & \ddots & \beta\sqrt{\Delta-1} & \\
&  & \beta\sqrt{\Delta-1} & \alpha\Delta & \beta\sqrt{\Delta-2}\\
&  &  & \beta\sqrt{\Delta-2} & \alpha(\Delta-2) +2\\
&  &  &  &
\end{array}
\right]
\]
\[
= \left[
\begin{array}
[c]{ccccc}%
\alpha & \beta\sqrt{\Delta-1} &  &  & \\
\beta\sqrt{\Delta-1} & \alpha\Delta & \ddots &  & \\
& \ddots & \ddots & \beta\sqrt{\Delta-1} & \\
&  & \beta\sqrt{\Delta-1} & \alpha\Delta & \beta\sqrt{\Delta-2}\\
&  &  & \beta\sqrt{\Delta-2} & \alpha\Delta+2\beta\\
&  &  &  &
\end{array}
\right] .
\]
We have
\[
S_{1} \leq\left[
\begin{array}
[c]{ccccc}%
\alpha\Delta & \beta\sqrt{\Delta-1} &  &  & \\
\beta\sqrt{\Delta-1} & \alpha\Delta & \ddots &  & \\
& \ddots & \ddots & \beta\sqrt{\Delta-1} & \\
&  & \beta\sqrt{\Delta-1} & \alpha\Delta & \beta\sqrt{\Delta-2}\\
&  &  & \beta\sqrt{\Delta-2} & \alpha\Delta+2\beta\\
&  &  &  &
\end{array}
\right]
\]
\[
=\alpha\left[
\begin{array}
[c]{ccccc}%
\Delta &  &  &  & \\
& \Delta &  &  & \\
&  & \ddots &  & \\
&  &  & \ddots & \\
&  &  & \  & \Delta\\
&  &  &  &
\end{array}
\right]  + \beta\left[
\begin{array}
[c]{ccccc}%
0 & \sqrt{\Delta-1} &  &  & \\
\sqrt{\Delta-1} & \ddots & \ddots &  & \\
& \ddots & \ddots & \sqrt{\Delta-1} & \\
&  & \sqrt{\Delta-1} & 0 & \sqrt{\Delta-2}\\
&  &  & \sqrt{\Delta-2} & 2\\
&  &  &  &
\end{array}
\right]
\]
\[
=\alpha\left[
\begin{array}
[c]{ccccc}%
\Delta &  &  &  & \\
& \Delta &  &  & \\
&  & \ddots &  & \\
&  &  & \ddots & \\
&  &  & \  & \Delta\\
&  &  &  &
\end{array}
\right]  + \beta X
\]
where $X$ is as in Lemma \ref{extra}. Then
\[
\rho(S_{1}) \leq\alpha\Delta+\beta\rho(X).
\]
If $\Delta\geq4$ and $\Delta= 3$ with $k(G) \geq4$, applying Lemma
\ref{extra}, the upper bound (\ref{upper}) follows.
\end{proof}

Finally we study the cases $\Delta=3$ and $k(G)\leq3$. In \cite{Ro08} it is
observed that the upper bound (\ref{upper}) does not hold for the adjacency
matrix ($\alpha=0$) when $\Delta=3$ if $k(G)=3$ or $k(G)=2$.

Let $\alpha\neq0$ and $\Delta= 3$. Let $k(G)=3$ or $k(G)=2$. From Corollary
\ref{cycleBk}, the spectral radius of $A_{\alpha}(C_{r}\{H\})$ is the spectral
radius of the $3 \times3$ matrix
\[
M_{3}=S_{1}=\left[
\begin{array}
[c]{ccc}%
\alpha & \beta\sqrt{2} & 0\\
\beta\sqrt{2} & 3 \alpha & \beta\\
0 & \beta & \alpha+ 2\\
&  &
\end{array}
\right] .
\]
or of the $2 \times2$ matrix
\[
M_{2}=S_{1}=\left[
\begin{array}
[c]{cc}%
\alpha & \beta\\
\beta & \alpha+ 2\\
&
\end{array}
\right] .
\]
We have
\[
M_{3}=\left[
\begin{array}
[c]{ccc}%
\alpha & \beta\sqrt{2} & 0\\
\beta\sqrt{2} & 3 \alpha & \beta\\
0 & \beta & \alpha+ 2\\
&  &
\end{array}
\right] =\left[
\begin{array}
[c]{ccc}%
3 \alpha & 0 & 0\\
0 & 3 \alpha & 0\\
0 & 0 & 3 \alpha\\
&  &
\end{array}
\right] +\left[
\begin{array}
[c]{ccc}%
-2 \alpha & \beta\sqrt{2} & 0\\
\beta\sqrt{2} & 0 & \beta\\
0 & \beta & 2 \beta\\
&  &
\end{array}
\right] .
\]
Then
\[
\rho(M_{3}) = 3 \alpha+ \beta\rho(Z(\gamma))
\]
where
\[
Z(\gamma)=\left[
\begin{array}
[c]{ccc}%
\gamma & \sqrt{2} & 0\\
\sqrt{2} & 0 & 1\\
0 & 1 & 2\\
&  &
\end{array}
\right]
\]
with $\gamma=-2 \frac{\alpha}{\beta}$. Similarly
\[
M_{2}=\left[
\begin{array}
[c]{cc}%
\alpha & \beta\\
\beta & \alpha+ 2\\
&
\end{array}
\right] =\left[
\begin{array}
[c]{cc}%
3 \alpha & 0\\
0 & 3 \alpha\\
&
\end{array}
\right]  + \beta\left[
\begin{array}
[c]{cc}%
\delta & 1\\
1 & 2\\
&
\end{array}
\right]
\]
where $\delta= -2 \frac{\alpha}{\beta}$. Then
\[
\rho(M_{2}) = 3 \alpha+ \beta\rho(W(\delta))
\]
where
\[
W(\delta) = \left[
\begin{array}
[c]{cc}%
\delta & 1\\
1 & 2\\
&
\end{array}
\right] .
\]

We recall a simplified version of the Weyl's inequalities for eigenvalues of
Hermitian matrices (see, e.g. \cite{HoJo85}, p. 181).

\begin{lemma}
Let $A$ and $B$ be Hermitian matrices of order $n \times n$. Let $C=A+B$. Let
\[
\alpha_{1} \geq\alpha_{2} \geq\ldots\alpha_{n},
\]
\[
\beta_{1} \geq\beta_{2} \geq\ldots\beta_{n}%
\]
and
\[
\gamma_{1} \geq\gamma_{2} \geq\ldots\gamma_{n}%
\]
be the eigenvalues of $A$, $B$ and $C$, respectively. Then, for $j=1,2,\ldots
,n$,
\begin{equation}
\label{Wes}\alpha_{j} + \beta_{n} \leq\gamma_{j} \leq\alpha_{j} + \beta_{1}.
\end{equation}

\end{lemma}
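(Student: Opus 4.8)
The plan is to prove both inequalities in $(\ref{Wes})$ directly from the Courant--Fischer min--max characterization of the eigenvalues of a Hermitian matrix. Recall that if $M$ is Hermitian of order $n$ with eigenvalues $\mu_{1}\geq\mu_{2}\geq\cdots\geq\mu_{n}$, then for each $j$
\[
\mu_{j}=\max_{\dim S=j}\ \min_{0\neq x\in S}\frac{x^{\ast}Mx}{x^{\ast}x},
\]
the maximum being taken over all $j$-dimensional subspaces $S$ of $\mathbb{C}^{n}$. The entire argument rests on splitting the Rayleigh quotient of $C$ as $\frac{x^{\ast}Cx}{x^{\ast}x}=\frac{x^{\ast}Ax}{x^{\ast}x}+\frac{x^{\ast}Bx}{x^{\ast}x}$ and bounding the second summand by the extreme eigenvalues of $B$, namely $\beta_{n}\leq\frac{x^{\ast}Bx}{x^{\ast}x}\leq\beta_{1}$ for every nonzero $x$.

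For the upper bound, first I would fix a $j$-dimensional subspace $S$. For every nonzero $x\in S$, the displayed splitting together with $\frac{x^{\ast}Bx}{x^{\ast}x}\leq\beta_{1}$ gives $\frac{x^{\ast}Cx}{x^{\ast}x}\leq\frac{x^{\ast}Ax}{x^{\ast}x}+\beta_{1}$. Taking the minimum over $0\neq x\in S$ and then the maximum over all $j$-dimensional $S$, and using that an additive constant passes through both the min and the max, yields $\gamma_{j}\leq\alpha_{j}+\beta_{1}$, which is the right-hand inequality of $(\ref{Wes})$.

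For the lower bound, the cleanest route is to apply the upper bound just obtained to the decomposition $A=C+(-B)$. Since the eigenvalues of $-B$ in decreasing order are $-\beta_{n}\geq\cdots\geq-\beta_{1}$, its largest eigenvalue is $-\beta_{n}$; the upper bound applied to $A=C+(-B)$ gives $\alpha_{j}\leq\gamma_{j}+(-\beta_{n})$, that is, $\gamma_{j}\geq\alpha_{j}+\beta_{n}$, the left-hand inequality. Alternatively, one can argue directly, replacing $\beta_{1}$ by $\beta_{n}$ in the previous paragraph and reversing the inequalities.

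The proof is entirely routine, so there is no genuine obstacle. The only points requiring care are bookkeeping ones: keeping the eigenvalues of all three matrices in the same (decreasing) order so that the index $j$ matches consistently across $A$, $B$ and $C$, and justifying that a constant may be pulled out of the nested $\max$--$\min$. Since the paper needs only this simplified form, no refinement of the index structure (as in the full Weyl inequalities) is required.
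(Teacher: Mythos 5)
Your proof is correct: the Courant--Fischer splitting of the Rayleigh quotient gives the upper bound $\gamma_{j}\leq\alpha_{j}+\beta_{1}$ cleanly, and applying it to $A=C+(-B)$ (whose largest eigenvalue is $-\beta_{n}$) yields the lower bound. The paper itself supplies no proof of this lemma --- it simply cites Horn and Johnson --- and your argument is precisely the standard one found there (the cases $j=1$ and $j=n$ of the full Weyl inequalities), so there is nothing to add or compare beyond noting that your write-up is self-contained where the paper defers to a reference.
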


In either of these inequalities equality holds if and only if there exists a
nonzero $n$ -vector that is an eigenvector to each of the three eigenvalues
involved. The conditions for equality in Weyl's inequalities were first
established by So in \cite{So94}.

We have
\[
Z(x)=\left[
\begin{array}
[c]{ccc}%
x-y & 0 & 0\\
0 & 0 & 0\\
0 & 0 & 0\\
&  &
\end{array}
\right]  +Z(y).
\]
We claim that $\rho(Z(\gamma))$ is a strictly increasing function of $\gamma$.
In fact, for $x<y$, Weyl's inequalities imply that
\[
\rho(Z(x))<0+\rho(Z(y))=\rho(Z(y)).
\]
A similar argument shows that $\rho(W(\delta))$ is a strictly increasing
function of $\delta$. Numerical computations show that $\rho(Z(-0.25))<2\sqrt
{2}\cos\frac{\pi}{7}$ and $\rho(Z(-0.2))>2\sqrt{2}\cos\frac{\pi}{7}$ ; and,
$\rho(W(-1.2))<2\sqrt{2}\cos\frac{\pi}{5}$ and $\rho(W(-1.1))>2\sqrt{2}%
\cos\frac{\pi}{5}$. Since $\rho(Z(\gamma))$ and $\rho(W(\delta)$ are
continuous functions, there exists $\gamma_{0}\in(-0.25,-0.2)$ such that
$\rho(Z(\gamma_{0}))=2\sqrt{2}\cos\frac{\pi}{7}$ and there exists $\delta
_{0}\in(-1.2,-1.1)$ such that $\rho(W(\delta_{0}))=2\sqrt{2}\cos\frac{\pi}{5}$.

\begin{theorem}
Let $G$ be a unicyclic graph. Let $\Delta=3$. If $\alpha> \frac{-\gamma_{0}%
}{2-\gamma_{0}}$ whenever $k(G)=3$ or if $\alpha> \frac{-\delta_{0}}%
{2-\delta_{0}}$ whenever $k(G)=2$, then the upper bound (\ref{upper}) holds.
\end{theorem}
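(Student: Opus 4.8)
The plan is to treat the two cases $k(G)=3$ and $k(G)=2$ in parallel, reducing each to a single scalar inequality that the hypothesis on $\alpha$ is tailored to guarantee. First I would recall that, exactly as in the preceding discussion, for $\Delta=3$ and $k(G)\le 3$ the graph $G$ is an induced subgraph of $C_{r}\{H\}$ with $H=B_{k(G)}$, so $\rho(A_{\alpha}(G))\le\rho(A_{\alpha}(C_{r}\{H\}))$, and by Corollary \ref{cycleBk} the right-hand side equals $\rho(M_{3})$ when $k(G)=3$ and $\rho(M_{2})$ when $k(G)=2$. Using the additive splittings already derived, this gives $\rho(A_{\alpha}(G))\le 3\alpha+\beta\rho(Z(\gamma))$ with $\gamma=-2\alpha/\beta$ in the first case and $\rho(A_{\alpha}(G))\le 3\alpha+\beta\rho(W(\delta))$ with $\delta=-2\alpha/\beta$ in the second.

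Next I would observe that the target bound (\ref{upper}) with $\Delta=3$ reads $3\alpha+2\beta\sqrt{2}\cos\frac{\pi}{2k(G)+1}$, whose trigonometric factor is precisely $\cos\frac{\pi}{7}$ when $k(G)=3$ and $\cos\frac{\pi}{5}$ when $k(G)=2$. Hence, after cancelling the common term $3\alpha$ and dividing by the positive factor $\beta$, proving (\ref{upper}) reduces to showing $\rho(Z(\gamma))<2\sqrt{2}\cos\frac{\pi}{7}=\rho(Z(\gamma_{0}))$ in the case $k(G)=3$ and $\rho(W(\delta))<2\sqrt{2}\cos\frac{\pi}{5}=\rho(W(\delta_{0}))$ in the case $k(G)=2$, where $\gamma_{0}$ and $\delta_{0}$ are the thresholds already located. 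Invoking the strict monotonicity of $\gamma\mapsto\rho(Z(\gamma))$ and $\delta\mapsto\rho(W(\delta))$ established via Weyl's inequalities, these scalar inequalities hold exactly when $\gamma<\gamma_{0}$ and $\delta<\delta_{0}$, respectively.

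It then remains to check that the stated hypotheses on $\alpha$ are equivalent to $\gamma<\gamma_{0}$ (resp. $\delta<\delta_{0}$). Since $\gamma=\delta=-2\alpha/(1-\alpha)$ and $\gamma_{0},\delta_{0}<0$, I would solve the inequality $-2\alpha/(1-\alpha)<\gamma_{0}$: multiplying through by the positive quantity $1-\alpha$ and rearranging gives $\alpha(2-\gamma_{0})>-\gamma_{0}$, and dividing by the positive factor $2-\gamma_{0}$ yields exactly $\alpha>\frac{-\gamma_{0}}{2-\gamma_{0}}$; the identical computation with $\delta_{0}$ gives $\alpha>\frac{-\delta_{0}}{2-\delta_{0}}$. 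Chaining the induced-subgraph bound (an inequality $\le$) with the strict scalar inequality (a strict $<$) then delivers the strict bound (\ref{upper}). The only point demanding care is the bookkeeping of signs in this last step: because $\gamma_{0}$ is negative and the map $\alpha\mapsto-2\alpha/(1-\alpha)$ is orientation-reversing on $(0,1)$, one must verify that the direction of the inequality is correctly preserved when translating a threshold on $\gamma$ into a threshold on $\alpha$. Everything else is a direct application of results already in hand.
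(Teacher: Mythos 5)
Your argument is correct and follows essentially the same route as the paper's proof: bound $\rho(A_{\alpha}(G))$ by $\rho(M_{k(G)})$, split off $3\alpha$, invoke the strict monotonicity of $\rho(Z(\cdot))$ and $\rho(W(\cdot))$ together with the definition of $\gamma_{0}$, $\delta_{0}$, and translate $\gamma<\gamma_{0}$ into $\alpha>\frac{-\gamma_{0}}{2-\gamma_{0}}$ via the decreasing map $\alpha\mapsto-2\alpha/(1-\alpha)$. Your extra care about the sign bookkeeping in that last translation is exactly the one point where the computation could go wrong, and you handle it correctly.
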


\begin{proof}
Let $k(G)=3$. There exists $\gamma_{0}$ such that $\rho(Z(\gamma_{0}))=2
\sqrt{2} \cos\frac{\pi}{7}$. Moreover, $\rho(Z(\gamma))$ is a strictly
increasing function. Hence
\[
\rho(Z(\gamma)) < 2 \sqrt{2} \cos\frac{\pi}{7}%
\]
for $\gamma< \gamma_{0}$. We recall that $\gamma= \frac{-2 \alpha}{1-\alpha}$.
Imposing the inequality
\[
\frac{-2 \alpha}{1-\alpha} < \gamma_{0}%
\]
we obtain $\alpha> \frac{-\gamma_{0}}{2-\gamma_{0}}$. Hence for such values of
$\alpha$, we have
\[
\rho(A_{\alpha}(C_{r}\{H\}))=\rho(M_{3})=3 \alpha+\beta\rho(Z(\gamma)) < 3
\alpha+ 2 (1-\alpha) \sqrt{2} \cos\frac{\pi}{7}.
\]
Then the upper bound (\ref{upper}) holds whenever $k(G)=3$ and $\Delta=3$. The
proof for the case $k(G)=2$ is similar.
\end{proof}

\bigskip\textbf{Acknowledgements. } The author is grateful to the
Mathematical Modeling (CMM), Universidad de Chile, Chile, in which this research was conducted. \bigskip

\bigskip

E-mail:

\ \ \ \ Oscar Rojo - \textit{orojo@ucn.cl}

\end{document}